\newtheorem{theorem}{Theorem}
\newtheorem{lemma}{Lemma}
\theoremstyle{definition}
\newtheorem{definition}{Definition}
\numberwithin{definition}{subsection}
\newtheorem{remark}{Remark}
\numberwithin{remark}{subsection}
\newtheorem{question}{Question}
\numberwithin{question}{subsection}
\numberwithin{figure}{subsection}
\numberwithin{equation}{subsection}
\newcommand{\grid}[1]{
\foreach \x in {0,...,#1} {
	\pgfmathtruncatemacro\yend{#1-\x}
    \foreach \y in {0,...,\yend} {
    	\fill (\x,\y) circle (2pt);
    }
    }
    }
\newcommand{\sgrid}[2]{
\foreach \x in {0,...,#1} {
    \foreach \y in {0,...,#2} {
    	\fill (\x,\y) circle (2pt);
    }
}
}
\newcommand{\tropline}[5]{
\draw (#1,#2)--(#1+#3,#2+#3);
\draw (#1,#2)--(#1-#5,#2);
\draw (#1,#2)--(#1,#2-#4);
}
\title{Incidence geometry and universality in the tropical plane}
\author[]{Milo Brandt \ \ Michelle Jones \ \ Catherine Lee \ \ Dhruv Ranganathan}
\date{}
\begin{document}
\begin{abstract}
We examine the incidence geometry of lines in the tropical plane. We prove tropical analogs of the Sylvester--Gallai and Motzkin--Rabin theorems in classical incidence geometry. This study leads naturally to a discussion of the realizability of incidence data of tropical lines. Drawing inspiration from the von Staudt constructions and Mn\"ev's universality theorem, we prove that determining whether a given tropical linear incidence datum is realizable by a tropical line arrangement requires solving an arbitrary linear programming problem over the integers.
\end{abstract}
\maketitle

\renewcommand*{\thetheorem}{\Alph{theorem}}

\section{Introduction}

This paper investigates the incidence geometry of lines and points in the standard tropical plane, taking inspiration from fundamental theorems in combinatorial geometry. Our first results establish tropical versions of two classical theorems in the incidence geometry of $\mathbb R^2$: the Sylvester--Gallai theorem and the Motzkin--Rabin theorem. 

A set of points $\mathcal P\subset \mathbb R^2$ in the tropical plane \textbf{determines an ordinary tropical line} if there exists a tropical line in $\mathbb R^2$ passing through exactly two points in $\mathcal P$. 

\begin{theorem}[Tropical Sylvester--Gallai]\label{thm: tropical-SG}
Any set of four or more points in the tropical plane determines at least one ordinary tropical line.
\end{theorem}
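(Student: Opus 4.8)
The plan is to work directly with the geometry of tropical lines, each of which is a tripod with a single vertex and three rays in the directions $(-1,0)$, $(0,-1)$, and $(1,1)$; such a line is determined by its vertex, so the space of tropical lines is a copy of $\mathbb R^2$. The key preliminary observation is a duality. For a fixed point $P=(x_0,y_0)$, the set of vertices $V$ for which $P$ lies on the tropical line $\ell_V$ is exactly the locus where $\min(x_0-p,\,y_0-q,\,0)$ is attained twice; this is an \emph{inverted} tripod $L_P^{*}$ centred at $P$, with rays in directions $(1,0)$, $(0,1)$, $(-1,-1)$. Hence a tropical line passes through both $A$ and $B$ iff its vertex lies in $L_A^{*}\cap L_B^{*}$, and $A,B,C$ lie on a common tropical line iff $L_A^{*},L_B^{*},L_C^{*}$ share a point. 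I record the trichotomy: $L_A^{*}\cap L_B^{*}$ is a single point when $A,B$ have distinct $x$-coordinates, distinct $y$-coordinates, and distinct differences $x-y$ (the \emph{generic} case), and is a whole ray exactly when $A,B$ agree in one of these (the \emph{aligned} case). Throughout I would use the symmetry $M=\left(\begin{smallmatrix}0&-1\\1&-1\end{smallmatrix}\right)\in SL_2(\mathbb Z)$, which cyclically permutes the three ray-directions, preserves the class of tropical lines and the notion of an ordinary line, and interchanges the three alignment types, so in each case it suffices to treat one representative.

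For the aligned case, suppose two points share a common value of $x-y$, i.e. lie on a common NE-diagonal $D$. Among the points of $\mathcal P$ on $D$, take the two outermost (largest $x$), say $A$ (inner) and $B$ (outer). As the vertex $V$ slides along the shared ray of $L_A^{*}\cap L_B^{*}$, keeping it just inside $A$ (between $A$ and the next point of $\mathcal P$ on $D$), the NE-leg of $\ell_V$ contains exactly $A$ and $B$, while the $W$- and $S$-legs sweep the region below and to the left. Each other point of $\mathcal P$ can lie on the $W$-leg for at most one position of $V$, and on the $S$-leg for at most one, so only finitely many positions are forbidden; any other $V$ yields a tropical line meeting $\mathcal P$ in exactly $\{A,B\}$. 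This produces an ordinary line (and does not even need $|\mathcal P|\ge 4$).

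The generic case is the heart of the matter. Here every pair lies on a \emph{unique} tropical line $\ell_{AB}$, on two of whose legs $A$ and $B$ sit (they cannot share a leg, as that forces an alignment); a short computation classifies the \emph{third}, unoccupied leg by the ordinary slope of the segment $AB$: negative slope gives a NE third-leg, slope in $(0,1)$ an $S$ third-leg, and slope greater than $1$ a $W$ third-leg (the excluded slopes $0,1,\infty$ are exactly the three alignment types). Since the two occupied legs lie on lines of constant $y$, $x$, or $x-y$ through $A$ or $B$, genericity forbids a further point there, so $\ell_{AB}$ is ordinary precisely when its third leg is empty. One clean family of ordinary lines comes from extremal points: if $T$ is the topmost point and some point lies to its right, the line through $T$ and that point has its vertex at height $y_T$, so its NE third-leg rises above every point and is empty, with the rotations by $M$ giving the analogous leftmost/bottommost statements. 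The difficulty is that these extremal constructions do not cover every configuration — for instance when a single point is simultaneously the top-right one and the symmetric obstructions also degenerate — because an empty third leg need not arise from a vertex lying beyond the whole point set.

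To finish the generic case I would pass to the dual arrangement. The inverted tripods $L_{P_1}^{*},\dots,L_{P_n}^{*}$ form an arrangement of $n$ tropical lines of the opposite orientation, under which an ordinary pair $\{A,B\}$ corresponds exactly to a \emph{simple} vertex — a crossing lying on only two of the tropical lines — while a collinear triple corresponds to three of them meeting at a point. Tropical Sylvester--Gallai then becomes a Melchior-type statement: an arrangement of $n\ge 4$ pairwise non-overlapping tropical lines has a simple vertex, which I expect to prove by an Euler-characteristic count on the planar subdivision cut out by the arrangement, bounding the number of high-multiplicity vertices against the numbers of edges and (bounded and unbounded) faces. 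The main obstacle, and where I would spend the most care, is making this count honest in the tropical setting: the tripods are unbounded, each contributes its own vertex and three rays running to infinity, and one must rule out the ``tropical Steiner'' configurations in which every crossing is multiple, which for odd $n$ survive the naive parity count and must be excluded geometrically.
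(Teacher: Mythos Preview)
Your aligned (coaxial) case is correct and essentially identical to the paper's treatment: pick the two outermost points on the shared axis and observe that only finitely many positions of the sliding vertex can pick up a third point.

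The generic case, however, is not proved. You correctly dualize to an arrangement of $n$ tropical lines with no two centres coaxial and reduce the statement to showing such an arrangement has a \emph{simple} crossing (one lying on exactly two of the lines). But you then only \emph{propose} a Melchior-style Euler-characteristic count, explicitly flag the obstacles (unbounded rays, the tripod vertices themselves, and ``tropical Steiner'' configurations that survive the naive parity bound), and stop. As it stands there is no argument, only a plan, and the difficulties you name are real: a straight $V-E+F$ count on the arrangement does not by itself exclude the all-triple configurations, so some additional geometric input is needed and you have not supplied it.

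The paper closes exactly this gap by passing one step further in the duality: rather than counting on the arrangement, it records the arrangement as the \emph{Newton subdivision} of the dilated simplex $\Delta_n$. In this encoding, simple crossings correspond to unit parallelograms, triple crossings to unit hexagons, and line centres to unit triangles; the hypothetical no-simple-crossing arrangement becomes a subdivision of $\Delta_n$ into unit triangles and hexagons only. A short forcing argument from the corner then kills this: the corner must be a triangle, its hypotenuse must abut a hexagon, the two upper edges of that hexagon must abut triangles, and now three consecutive diagonal edges each demand a hexagon above them, which overlap. This replaces your hoped-for Euler count with a four-line local obstruction, and is the missing idea in your proposal.
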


A set of points $\mathcal P \subset \mathbb R^2$ with each point coloured either red or blue \textbf{determines a monochromatic tropical line} if there exists a tropical line in $\mathbb R^2$ containing at least two points of $\mathcal P$ and only containing points of the same colour.

\begin{theorem}[Tropical Motzkin--Rabin]\label{thm: tropical-MR}
Let $\mathcal P$ be a set of four or more points, each coloured red or blue, that determines finitely many tropical lines. Then $\mathcal P$ determines at least one monochromatic tropical line.
\end{theorem}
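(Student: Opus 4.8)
The plan is to reduce the hypothesis to a general position statement and then induct on $|\mathcal P|$ using the tropical Sylvester--Gallai theorem. Since a tropical line is a tripod with rays in the directions $(-1,0)$, $(0,-1)$ and $(1,1)$, two points can lie on a common ray only when they agree in one of the coordinates $x$, $y$ or $x-y$, and in that case the vertex can slide along that ray to produce infinitely many tropical lines through the pair. Thus $\mathcal P$ determining finitely many tropical lines is equivalent to general position: no two points share an $x$-, $y$-, or $(x-y)$-value. I would immediately extract two facts. First, each ray meets $\mathcal P$ at most once, so every tropical line contains at most three points of $\mathcal P$. Second, any two points lie on a unique tropical line. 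Assuming $\mathcal P$ determines no monochromatic line, the hypothesis becomes: the unique tropical line through any two equally coloured points carries a third point, necessarily of the opposite colour, so it meets $\mathcal P$ in exactly two points of one colour and one of the other.

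The argument itself would be strong induction on $n = |\mathcal P|$. For the base case $n = 4$ I would argue directly from the two facts above: the monochromatic colourings are immediate, and for the mixed colourings one supposes no monochromatic line and observes that each monochromatic pair is then forced to span a line through a prescribed opposite-coloured point; two such forced lines would share two points and hence coincide, placing four points of $\mathcal P$ on a single tropical line and contradicting the three-point bound.

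For the inductive step, with $n \ge 5$, I would apply Theorem~\ref{thm: tropical-SG} to obtain an ordinary tropical line $\ell_0$ meeting $\mathcal P$ in exactly two points $a$ and $b$. If $a$ and $b$ have the same colour, $\ell_0$ is monochromatic and we are finished, so suppose $a$ is red and $b$ is blue. Deleting one endpoint leaves $n - 1 \ge 4$ points, still in general position and carrying the inherited colouring, so the inductive hypothesis produces a monochromatic line $m$. If $m$ misses the deleted endpoint, or passes through it but matches its colour, then $m$ is still monochromatic for all of $\mathcal P$ and we are done.

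The delicate point, which I expect to be the main obstacle, is the spoiler case where $m$ is destroyed precisely by the deleted endpoint. If deleting $b$ is spoiled, then $m$ must be red and pass through $b$, so by the three-point bound $m$ consists of two red points together with $b$; symmetrically, a spoiled deletion of $a$ yields a line of two blue points together with $a$. I would first try to avoid a spoiler by choosing which endpoint to delete, and then confront the remaining case in which both deletions are spoiled simultaneously. Here pure incidence counting is not enough; I would aim to use the rigidity that two points determine a unique tropical line, together with the three-point bound, to show that the several forced two-plus-one lines must share two points and therefore coincide, again forcing four collinear points --- an impossibility. Making this overlap genuinely occur, rather than merely constraining the configuration combinatorially, is where the specific geometry of the three ray directions must be exploited, possibly by re-applying Theorem~\ref{thm: tropical-SG} to subconfigurations to manufacture additional ordinary lines.
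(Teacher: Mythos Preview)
Your reduction and base case are correct: the finiteness hypothesis is precisely the non-coaxial condition, any tropical line then meets $\mathcal P$ in at most three points (one per ray), and for $n=4$ the forced-coincidence argument works cleanly. The problem is the inductive step, and the gap you flag is real, not a formality.

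In the double-spoiler situation you reach six distinct points $a,b,r_1,r_2,b_1,b_2$ (distinctness holds since $r_i=a$ would make $m$ share two points with $\ell_0$ and hence coincide with it, contradicting ordinariness; likewise $b_i\neq b$). But now the three lines $\ell_0=\{a,b\}$, $m=\{r_1,r_2,b\}$, $m'=\{b_1,b_2,a\}$ are pairwise distinct and share at most one point each, so your coincidence mechanism does not fire. For $n=5$ the six points already overflow $\mathcal P$, so the double spoiler is impossible and the induction closes; for $n=6$ one can grind out the case analysis (the line through $a$ and $r_1$ must pick up some $b_i$, and tracking all such forced third points does eventually make two three-point lines share two points). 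For $n\ge 7$, however, the third point on each auxiliary same-colour line may be any point of $\mathcal P$, the case tree no longer closes, and iterating the deletion just reproduces the same obstruction one level down. This mirrors the classical situation: Euclidean Motzkin--Rabin does not follow from Sylvester--Gallai by this deletion scheme, and a genuinely new idea is required. Your final sentence is an accurate diagnosis, but it is also an admission that the proof is not yet there.

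The paper's argument is entirely different and does not invoke Theorem~\ref{thm: tropical-SG} at all. Via projective duality the statement becomes: any two-coloured arrangement of tropical lines with non-coaxial centres has a monochromatic intersection point. The arrangement is encoded as a \emph{coloured} semiuniform Newton subdivision of $\Delta_n$, where edges inherit colours from the lines and parallel edges of each face are forced to match; a monochromatic point corresponds to a monochromatic parallelogram or hexagon. The proof then shows directly that for $n\ge 4$ no such colouring avoids these. The key device is a local pattern called an $(n,m)$-arm: once present, the colouring rules force another arm above and to the right, so arms propagate indefinitely and hence cannot occur in a finite subdivision. A short case analysis of the three possible shapes at the lower-left corner of $\Delta_n$ then shows that each one forces an arm. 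There is no induction on $|\mathcal P|$.
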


\noindent
We note that the conclusion of Theorem~\ref{thm: tropical-MR} fails without the supposition that the collection of points $\mathcal P$ determines finitely many lines. For instance, four points on the $x$-axis with alternating colours do not determine a monochromatic tropical line.

We prove the results above by analyzing the Newton subdivision of a dilated simplex that is induced the arrangement of tropical lines projectively dual to $\mathcal P$. Classically, a natural successor to the Sylvester--Gallai problem is the determination of the minimum number of ordinary Euclidean lines in an arrangement of given size. Our attempts to address this via Newton subdivisions leads to the following question: \textit{when is a polyhedral subdivision of a dilated standard simplex realized by an arrangement of tropical lines?} One might view this question as being a version of determining when a combinatorial geometry of lines is realized by an honest line arrangement in the projective plane. By a famous result of Mn\"ev, this latter realization problem is essentially unconstrained: the realization spaces of rank $3$ matroids can be arbitrary algebraic sets. 

We establish a piecewise linear analog of Mn\"ev's universality principle, drawing as inspiration von Staudt's ``algebra of throws''. We define a class of Newton subdivisions of the simplex, \textbf{linear Newton subdivisions}, which are candidates for subdivisions arising from line arrangements. We prove a universality theorem concerning such subdivisions.

\begin{theorem}[Tropical universality]\label{thm: universality}
For any polyhedral subset $S$ of $\mathbb R_{>0}^m$ defined by linear equalities and inequalities with integer coefficients, there exists a linear Newton subdivision whose realization space is linearly isomorphic to $S$.
\label{universalitythm}
\end{theorem}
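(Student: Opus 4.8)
The plan is to mimic the von Staudt ``algebra of throws'' in the tropical world, building a dictionary of \emph{gadgets}---small arrangements of tropical lines whose incidence constraints force prescribed relations among the positions of their apices---and then wiring these gadgets together so that the resulting linear Newton subdivision has realization space cut out by exactly the equalities and inequalities defining $S$. The key structural observation is that a tropical line is determined by its apex $(a,b)\in\mathbb R^2$ and emits rays in the three fixed directions $(1,0)$, $(0,1)$, $(-1,-1)$; consequently every ``concurrence'' or ``point-on-line'' condition translates into an \emph{integer-affine} relation among apex coordinates. This is precisely why the natural arithmetic available is that of the tropical semiring, and why we can encode addition and integer scaling but \emph{not} multiplication. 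The absence of a multiplier gadget is the whole reason the universality obtained here is linear over $\mathbb Z$ rather than polynomial over $\mathbb R$ as in Mn\"ev's theorem.

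First I would fix a \emph{coordinate frame}: a bounded collection of tropical lines pinned at prescribed apices that sets up $m$ free real parameters $x_1,\dots,x_m$, each realized as the displacement of a designated ``variable line'' along a fixed axis, and each constrained to lie in $\mathbb R_{>0}$ by an incidence with the frame. Next I would construct the core \textbf{addition gadget}: an arrangement of a few auxiliary lines, anchored to the frame, whose combinatorial type persists exactly when $x_k = x_i + x_j$. Because the rays travel in the directions $(1,0),(0,1),(-1,-1)$, forcing two vertices to lie on a common ray already produces such additive identities, so this gadget is geometrically natural. Multiplication of a variable by a positive integer $n$ is then obtained by chaining $n-1$ addition gadgets (duplicating a coordinate through a ``copy'' gadget as needed), and integer constants from the data of $S$ are realized by pinning reference apices in the frame.

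Given $S=\{x\in\mathbb R_{>0}^m : Ax=b,\ Cx\ge d\}$ with $A,C,b,d$ integral, I would move negative coefficients across each relation so that every equation reads $\sum_{i} \alpha_i x_i + \beta = \sum_j \gamma_j x_j + \delta$ with nonnegative integers $\alpha_i,\gamma_j,\beta,\delta$, build both sides with addition-and-scaling gadgets introducing auxiliary coordinates for the intermediate sums, and then equate the two outputs with a single ``identification'' gadget. Each inequality in $Cx\ge d$ is imposed by an \textbf{inequality gadget} whose combinatorial type is stable exactly on the corresponding closed (or open) half-space, realized as a choice of which side of a frame line a constructed apex falls on. Every auxiliary coordinate is, by construction, a fixed integer-linear function of $x_1,\dots,x_m$; hence the realization space of the assembled subdivision is the graph of a linear map over $S$, and projection to the $x$-coordinates is the desired linear isomorphism onto $S$.

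The main obstacle---and the heart of the argument---is to assemble all of these gadgets into a single tropical line arrangement without introducing \emph{spurious} incidences or allowing any gadget to \emph{degenerate}, so that the combinatorial type of the induced linear Newton subdivision is genuinely constant over all of $S$ and imposes no constraints beyond the intended ones. The standard remedy, which I would adapt to the tropical setting, is to place distinct gadgets in well-separated regions of the plane, sharing lines only along the controlled ``wires'' that transport a common coordinate, and to verify a general-position statement guaranteeing that the only alignments forced by the subdivision are those built into the gadgets. Checking that the frame, addition, copy, and inequality gadgets each have the claimed stable combinatorial type and compose cleanly---so that the realization space is neither larger nor smaller than the graph over $S$---is the delicate bookkeeping on which the theorem ultimately rests.
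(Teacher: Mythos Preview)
Your outline is in the right spirit---von Staudt--style gadgets encoding addition and integer scaling, then composed to realize a given integer linear system---and would likely succeed if carried out. But the paper's implementation is organized quite differently, and the difference is worth noting.

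Rather than scattering free-form gadgets in the plane and relying on a separation/general-position argument to rule out spurious incidences, the paper works entirely with a rigid, one-dimensional scaffold: a family of tropical lines sharing a common vertical axis, so that the free parameters are the consecutive vertical gaps $v_0,\dots,v_{m-1}$. Each constraint is imposed by appending a fixed block of ``bars'' to the left of the existing Newton subdivision (Lemma~\ref{newtonConstruction}), and the only relations this block can ever force are of the restricted \emph{intervallic} form $\sum_{a\le j<b}v_j=\sum_{a'\le j<b'}v_j$. The verification that no extra constraints appear is then a finite case check on the six vertex types in the added region, not a general-position argument. For inequalities the paper does not build a separate gadget at all: it introduces a slack coordinate (automatically positive, since the ambient space is $\mathbb R_{>0}^{m'}$) and rewrites $f(v)>0$ as the equality $f^+(v)=f^-(v)+v_{\text{slack}}$, reducing everything to the equality case.

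What this buys: your modular approach is conceptually closer to classical Mn\"ev and would generalize more readily, but the ``delicate bookkeeping'' you flag at the end is genuinely the hard part, and you have not indicated how you would control the extra degrees of freedom introduced by the separation distances between gadgets or guarantee that the wires impose no unintended relations. The paper's linear, append-one-constraint-at-a-time architecture trades modularity for a construction in which those questions never arise, and the reduction of inequalities to equalities via slack variables is a cleaner device than a dedicated half-space gadget.
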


\subsection{Combinatorial incidence geometry} The Sylvester--Gallai theorem states that any collection of non-collinear points in $\mathbb R^2$ determines a line passing through exactly two of the points. This theorem has its origins in a problem posed by James Joseph Sylvester in 1893. Tibor Gallai gave an elegant proof of the statement in 1943. It is believed that Sylvester's interest in the question derived from the classical geometry of plane curves. If $C\subset \mathbb P_{\mathbb C}^2$ is a smooth plane cubic, an explicit calculation shows that $C$ has precisely $9$ inflection points. Each line passing through two of these points contains a third. In other words, the Sylvester--Gallai theorem is false over the complex numbers. It follows that the $9$ inflection points of a smooth cubic defined over $\mathbb R$ cannot all have real coordinates. In fact, the result is also false over the field of $p$-adic numbers with $p\geq 5$, see~\cite{MO17}. 

Lines in the tropical plane may be viewed as limits of amoebas of curves in $(\mathbb C^\star)^2$, or as degenerations of families of lines in a two-dimensional algebraic torus over a valuation ring. As a result, the geometry of tropical curves in $\mathbb R^2$ frequently resembles projective geometry over $\mathbb C$. For instance, tropical curves in $\mathbb R^2$ satisfy an analogue of Bezout's theorem and, famously, may be used to count holomorphic curves in the projective plane. At the outset, it is unclear whether or not the Sylvester--Gallai property should hold in the tropical plane. Nonetheless, Theorem~\ref{thm: tropical-SG} shows that lines in the standard model of the tropical plane satisfy a Sylvester--Gallai theorem. We bring the reader's attention to two features of the result. First, the statement of tropical Sylvester--Gallai is stronger than the classical statement: as long as there are at least $4$ points in the collection, there is no need for the supposition that the points are non-collinear. If there are only $3$ points in the collection, then tropical non-collinearity forces the existence of an ordinary line. Second, the inflection points of a smooth tropical cubic are tropically collinear. This can be seen as a consequence of our result, or via direct computation, using the characterization of tropical inflections in~\cite{Bru11}. 

A variation on this theme, first posed by Graham and Newton, is as follows. In a collection of non-collinear points in the plane coloured red and blue, \textit{does there exist a monochromatic line passing through at least two points in the collection?} Motzkin and Rabin each provided a solution within a few years~\cite{chakerian_1970}. We refer the reader to the survey by Borwein and Moser for an overview of the history of the Sylvester--Gallai theorem and its generalizations~\cite{borwein_moser_1990}. On the tropical side, Theorem~\ref{thm: tropical-MR} establishes an analogous result. Again, however, the tropical results are not formal translations of the classical ones. A collection of coloured points that determines infinitely many lines may fail the Motzkin--Rabin property.

\subsection{Counting ordinary tropical lines and universality} Our route to the universality theorem passes through an enumerative version of the Sylvester--Gallai theorem. Specifically, we ask \textit{how many ordinary Euclidean lines can a collection of $n$ points in $\mathbb R^2$ determine?} In the Euclidean case, Green and Tao \cite{green_tao_2013} showed that for sufficiently large $n$, the minimum number of ordinary Euclidean lines through $n$ points is at least $n/2$, as conjectured by Dirac and Motzkin in 1951. 

We consider the analogous question for tropical lines and prove a strict lower bound on the minimum number of ordinary tropical lines determined by $n$ points. This lower bound is achieved by constructing appropriate subdivisions of the simplex, rather than the tropical arrangements themselves. The subdivisions we construct are not all realizable, which leads naturally to the question of when a subdivision of a lattice simplex into appropriate polytopes is realized by an arrangement of tropical lines. This realizability problem is the subject of our universality result. More generally, we hope that this paper is the beginning of a larger story involving the combinatorial incidence and enumerative geometry of tropical lines in $\mathbb R^2$, and take the universality theorem to be an indication of the potential richness of such a study. 

The universality statement presented in Theorem~\ref{thm: universality} takes inspiration from Mn\"ev's universality theorem. We recall that universality states that the realization space of a rank $3$ matroid can locally take the form of an arbitrary affine variety over the integers~\cite{mnev_1985,Mnev1988}. In other words, besides the obvious structure -- namely that they are constructible sets -- these realization spaces do not possess any special structure. Our result recovers this statement in the piecewise linear world. In algebraic geometry, one of the most spectacular applications of universality is in the law of Vakil--Murphy \cite{murph2004}. In this setting, universality is leveraged to show that many moduli spaces, notably of smooth curves in projective space, have unconstrained local geometry. We believe that a piecewise linear universality phenomenon should be ubiquitous in tropical moduli spaces and view the instance proved here as a starting point to establish such statements.

\subsection{Open questions}
We leave the following questions for future investigation.

\begin{question}
What is the minimum number of ordinary tropical lines determined by $n$ points?
\end{question}
\noindent In Section \ref{bnds}, we show that $n$ points determine at least $n-3$ ordinary tropical lines. However, this bound is not sharp; we are not aware of any family of arrangements determining fewer than $\frac{n^2}{4}$ points asymptotically.

\begin{question}
Do the tropical Sylvester--Gallai and Motzkin--Rabin theorems hold for lines in tropical linear spaces of rank $3$?
\end{question}
\noindent Our paper only considers these theorems in the standard tropical plane $\mathbb R^2$. The natural next case would be the tropicalization of the diagonal plane in $\mathbb P^3$. The central methods of this paper, namely Newton subdivisions associated to curves, do not immediately generalize to this case and new combinatorial ideas are most likely necessary. 

Another question, motivated by non-archimedean geometry, is the following. 
\begin{question}
Does there exist a linear embedding $\mathbb P^2\hookrightarrow \mathbb P^r$ for some $r$, such that the tropical Sylvester--Gallai theorem fails for tropical lines in the tropicalization of $\mathbb P^2$ in this embedding?
\end{question}

By Payne's limit theorem for Berkovich spaces, there exists some embedding of $\mathbb P^2$ in $\mathbb P^r$ such that the inflection points of a smooth elliptic curve tropicalize to distinct points~\cite{Pay09}. However, it is not immediate that this embedding can be taken to be linear.

\subsection{Outline} In Section~\ref{sec: background}, we introduce the basic objects of study -- tropical line arrangements and the subdivisions of simplices that they induce. The tropical Sylvester--Gallai theorem is proved in Section~\ref{tsg}. This section also contains a discussion for bounds on the number of ordinary tropical lines determined by a collection of points. Section~\ref{sec: motzkin-rabin} is dedicated to the proof of the tropical Motzkin--Rabin theorem, which follows from a careful analysis of edge-colourings in certain Newton subdivisions of the simplex. Finally, our universality result is proved in Section~\ref{universalitySection} by building incidence geometric descriptions of the basic operations composing a semilinear system of equations.

\subsection*{Acknowledgements} This research was completed as part of the 2017 Summer Undergraduate Mathematics Research Program at Yale (S.U.M.R.Y.) program. We are grateful to the mentors and participants for creating a stimulating research environment. The project has benefited from conversations with Derek Boyer, Melody Chan, Dave Jensen, Max Kutler, Hannah Markwig, Diane Maclagan, Andre Moura, Sam Payne, Ben Smith, Ayush Tewari, and Scott Weady. Finally, we thank both referees for their careful reading and helpful comments.

\numberwithin{theorem}{subsection}
\numberwithin{corollary}{subsection}
\numberwithin{lemma}{subsection}

\section{Background}\label{sec: background}
Throughout our paper we will restrict our discussion of tropical line arrangements to $\mathbb R^2$.

\begin{definition}
A \textbf{tropical line} is the corner locus of a convex piecewise linear function of the form $g(x,y)=\max\{x-a, y-b, 0\}$ for $(a,b)\in \mathbb R^2$. We call $(a,b)$ the \textbf{center} of the tropical line.
\end{definition}

Tropical lines are determined completely by their centers and have rays pointing in the directions of the \textbf{tropical basis vectors} $e_1=(1,1)$, $e_2=(-1,0)$, and $e_3=(0,-1)$. We call these rays the \textbf{axes} of a tropical line, and we say that two points are \textbf{coaxial} if there is a tropical line containing both points on the same axis, as in Figure \ref{a line}.

\begin{figure}[h!]
\begin{minipage}{0.495\textwidth}
\centering
\begin{tikzpicture}
\draw[->] (0,0) -- (1,1) node[pos=1,xshift=5pt,yshift=5pt]{$e_1$};
\draw[->] (0,0) -- (-1,0) node[pos=1,xshift=-7pt]{$e_2$};
\draw[->] (0,0) -- (0,-1) node[pos=1,yshift=-7pt]{$e_3$};
\fill (0,0) circle (2pt);
\end{tikzpicture}
\caption{A tropical line with its axes labeled by the basis vectors.}
\label{a line}
\end{minipage}
\hfill
\begin{minipage}{0.495\textwidth}
\centering
\begin{tikzpicture}
\draw[->] (0,0) -- (1,1);
\draw[->] (0,0) -- (-1.5,0);
\draw[->] (0,0) -- (0,-1.5);
\draw (-0.5,-0.5) -- (0,0);
\draw [->] (-0.5,-0.5) -- (-1.5,-0.5);
\draw [->] (-0.5,-0.5) -- (-0.5,-1.5);
\fill (0.3,0.3) circle (2pt);
\fill (.7,.7) circle (2pt);
\end{tikzpicture}
\vspace{0.3cm}
\caption{Two lines determined by a pair of coaxial points.}
\label{coaxial points}
\end{minipage}
\end{figure}

Two distinct points in the tropical plane determine exactly one tropical line unless they are coaxial, in which case they determine infinitely many tropical lines. Likewise, two distinct tropical lines intersect at exactly one point if their centers are not coaxial. Otherwise, the lines share an axis and intersect at infinitely many points.

\subsection{Tropical point-line duality}
As in projective geometry, there is a duality of the tropical plane switching the roles of points and lines while preserving incidence properties. Let $\mathscr A$ be the set of points and lines in the tropical plane. Projective duality is a map
\[
\phi: \mathscr A\to \mathscr A,
\]
taking a point $(a,b)$ to the tropical line centered at $(-a,-b)$, and taking the line with center at $(c,d)$ to the point $(-c,-d)$. The map is a containment reversing involution, i.e. $p\in \ell$ if and only if $\phi(\ell)\in \phi(p)$. Using this duality, we translate questions about arrangements of points into equivalent questions about arrangements of lines.

\subsection{Newton subdivisions}
Fix a convex piecewise linear function $F:\mathbb R^2\rightarrow\mathbb R$ defined as
\[F(x,y)=\max\{f(x,y):f\in \mathcal F\}\]
where $\mathcal F$ is a finite set of (affine) linear functions $\mathbb R^2\rightarrow \mathbb R$. The locus of $\mathbb R^2$ on which $F(x,y)$ is non-differentiable defines the \textbf{tropical curve of $F$}. This tropical curve is a graph, equipped with a piecewise linear embedding in $\mathbb R^2$. The data of this embedding can be recorded in a subdivision of an appropriate lattice polytope, as we now recall. We direct the reader to~\cite[Chapter 3]{MS15} for additional details. 

For each point $(x,y)\in \mathbb R^2$, consider the set of functions $f\in \mathcal F$ that achieve the maximum value, i.e. functions such that $f(x,y) = F(x,y)$. Define a polytope $\Delta_{(x,y)}$ to be the convex hull of the gradients of these functions. The Newton polygon $\Delta_F$ associated to $F$ is defined as the convex hull of gradients of \textit{all} functions $f\in \mathcal F$. Since the gradients used in forming $\Delta_{(x,y)}$ form a subset of those used in forming $\Delta_F$, there is a containment
\[
\Delta_{(x,y)}\subset \Delta_F.
\]
One can check that the polytopes $\Delta_{(x,y)}$ assemble to form the cells in a polyhedral subdivision of $\Delta_F$. We define this to be the \textbf{Newton subdivision} of $\Delta_F$ induced by $F$. 

\subsection{Newton subdivisions and line arrangements} Every collection of tropical lines in $\mathbb R^2$ defines a Newton subdivision of a dilated standard simplex. We now explain this procedure, and then identify a class of subdivisions of the simplex that could plausibly arise in this manner. 

\begin{definition}
An \textbf{arrangement of tropical lines} is a finite collection of tropical lines in $\mathbb R^2$. 
\end{definition}

Fix an arrangement of tropical lines $\mathcal L$. For each $\ell\in \mathcal L$, let $f_{\ell}(x,y)=\max\{x-a_{\ell},x-b_{\ell},0\}$ where $(a_{\ell},b_{\ell})$ is the center of $\ell$. The locus in $\mathbb R^2$ where this piecewise linear function is non-differentiable is precisely the tropical line $\ell$.  From the arrangement, we build a new convex piecewise linear function 
\[F=\sum_{\ell\in \mathcal L}f_{\ell}.\]
The corner locus of $F$ is the union of the tropical lines in the arrangement. We define the \textbf{Newton subdivision associated to a line arrangement $\mathcal L$} to be the Newton subdivision induced by the corresponding function $F$. Intuition can be gained from the classical analogue. The vanishing of a linear polynomial in $2$ variables in $\mathbb C^2$ defines a line, while the vanishing of a product of linear polynomials defines a line arrangement. Here, each piecewise linear function $f_\ell$ define a tropical line, and their ``tropical product'', i.e. their sum, defines a tropical arrangement.\\

\noindent
{\bf Explicit description of the Newton subdivision.} Using the procedure described in the preceding subsection, we build a subdivision of $\Delta_n$ induced by an arrangement of tropical lines. For arrangements of lines, the procedure can be made more explicit. Denote by $\Delta_n$ be the standard lattice triangle with side (lattice) length $n$. That is
\[
\Delta_n = \{(x,y):x\geq 0,\,y\geq 0,\,x+y\leq n\}
\] 
Let $\bar e_1=(-1,1)$ and $\bar e_2=(0,-1)$, and $\bar e_3=(1,0)$. 
  Fix a point $p\in \mathbb R^2$. We may describe the arrangement locally around $p$ as follows. Let $c$ be the number of tropical lines in $\mathcal L$ centered at $p$. Let $w_1,\,w_2$ and $w_3$ be the number of tropical lines $\ell\in\mathcal L$ such that $p$ is on the axis of $\ell$ in the direction of $e_1,\,e_2$ and $e_3$ respectively. The cell in the Newton subdivision corresponding to the local picture at $p$ will be a Minkowski sum of $c$ triangles, $w_1$ segments in the direction $\bar e_1$, $w_2$ segments in the direction $\bar e_2$, and $w_3$ segments in the direction $\bar e_3$. Call this polytope $P_{c,w_1,w_2,w_3}$. Figure \ref{i haven't made this figure} illustrates $P_{c,w_1,w_2,w_3}$. \\
  
 \noindent 
{\bf Linear and semiuniform subdivisions.}  The description above of Newton subdivisions attached to tropical line arrangements may be axiomatized, yielding a class of Newton subdivisions of the simplex that could plausibly arise from such arrangements. More precisely, define a \textbf{linear Newton subdivision} to be a polyhedral subdivision of $\Delta_n$ into faces of the form $P_{c,w_1,w_2,w_3}$. Given an arrangement of tropical lines, one may inspect the possible local pictures near the intersection points of the lines. An examination of these local possibilities shows that the Newton subdivision of a line arrangement always yields a linear Newton subdivision.

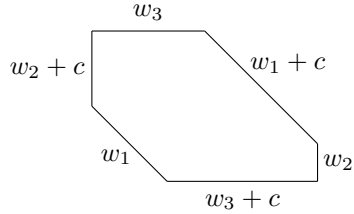
\begin{figure}
\begin{center}
\begin{tikzpicture} [scale = .5]
\draw (0,0) -- (0,2) node[midway,xshift=-17pt]{$w_2+c$};
\draw (0,2) -- (3,2) node[midway,yshift=7pt]{$w_3$};
\draw (3,2) -- (6,-1) node[midway,yshift=9pt,xshift=10pt]{$w_1+c$};
\draw (6,-1) -- (6,-2) node[midway,xshift=8pt]{$w_2$};
\draw (6,-2) -- (2,-2) node[midway,yshift=-7pt]{$w_3+c$};
\draw (2,-2) -- (0,0) node[midway,xshift=-5pt,yshift=-5pt]{$w_1$};
\end{tikzpicture}
\end{center}
\caption{The polygon $P_{c,w_1,w_2,w_3}$ labeled with its edge lengths.}
\label{i haven't made this figure}
\end{figure}

If $\mathcal L$ contains no pair of lines with coaxial centers, every edge in the Newton subdivision will have lattice length one. Figure \ref{Nine local cases} shows an enumeration of the possible faces appearing in the Newton subdivision of such a line arrangement. Define a linear Newton subdivision to be \textbf{semiuniform} if it only contains faces of the form $P_{c,0,0,0}$ and $P_{0,w_1,w_2,w_3}$ where $c,w_1,w_2,w_3\in \{0,1\}$. That is, the polytopes appearing in the subdivision are always Minkowski sums of $e_1$, $e_2$, and $e_3$.

\begin{figure} \begin{center}
\begin{tabular}{l l}
\begin{tikzpicture}[scale=0.85]
\draw (-1.25,0)--(0,0)--(0,-1.25); 
\draw (0,0) -- (.883,.883);
\fill[black] (0,0) circle (.07cm);
\draw[dashed] (0,0) circle (1.25cm);
\draw[->] (2,0) -- (3,0);
\draw (3.5,-.5) -- (3.5,.5) -- (4.5,-.5) -- cycle;
\fill[black] (3.5,-.5) circle (.07cm);
\fill[black] (3.5,.5) circle (.07cm);
\fill[black] (4.5,-.5) circle (.07cm);
\end{tikzpicture}
&
\begin{tikzpicture}[scale=0.85]
\fill[black] (0,0) circle (.07cm);
\draw[dashed] (0,0) circle (1.25cm);
\draw[->] (2,0) -- (3,0);
\fill[black] (3.5,0) circle (.07cm);
\end{tikzpicture}
\\
\begin{tikzpicture}[scale=0.85]
\draw (-1.25,0)--(1.25,0); 
\fill[black] (0,0) circle (.07cm);
\draw[dashed] (0,0) circle (1.25cm);
\draw[->] (2,0) -- (3,0);
\draw (3.5,-.5) -- (3.5,.5);
\fill[black] (3.5,-.5) circle (.07cm);
\fill[black] (3.5,.5) circle (.07cm);
\end{tikzpicture}
&
\begin{tikzpicture}[scale=0.85]
\draw (-.883,-.883)--(.883,.883); 
\fill[black] (0,0) circle (.07cm);
\draw[dashed] (0,0) circle (1.25cm);
\draw[->] (2,0) -- (3,0);
\draw (4,.5) -- (5,-.5);
\fill[black] (4,.5) circle (.07cm);
\fill[black] (5,-.5) circle (.07cm);
\end{tikzpicture}
\\
\begin{tikzpicture}[scale=0.85]
\draw (0,-1.25)--(0,1.25); 
\fill[black] (0,0) circle (.07cm);
\draw[dashed] (0,0) circle (1.25cm);
\draw[->] (2,0) -- (3,0);
\draw (3.5,0) -- (4.5,0);
\fill[black] (3.5,0) circle (.07cm);
\fill[black] (4.5,0) circle (.07cm);
\end{tikzpicture}
&
\begin{tikzpicture}[scale=0.85]
\draw (-.883,-.883) -- (.883,.883); 
\draw (0,-1.25) -- (0,1.25); 
\fill[black] (0,0) circle (.07cm);
\draw[dashed] (0,0) circle (1.25cm);
\draw[->] (2,0) -- (3,0);
\draw (3.5,0.5) -- (4.5,0.5) -- (5.5,-0.5) -- (4.5,-.5) -- cycle;
\fill[black] (3.5,.5) circle (.07cm);
\fill[black] (4.5,.5) circle (.07cm);
\fill[black] (5.5,-.5) circle (.07cm);
\fill[black] (4.5,-.5) circle (.07cm);
\end{tikzpicture}
\\
\begin{tikzpicture}[scale=0.85]
\draw (0,-1.25) -- (0,1.25); 
\draw (-1.25,0) -- (1.25,0); 
\fill[black] (0,0) circle (.07cm);
\draw[dashed] (0,0) circle (1.25cm);
\draw[->] (2,0) -- (3,0);
\draw (3.5,-.5) -- (3.5,0.5) -- (4.5,.5) -- (4.5,-.5) -- cycle;
\fill[black] (3.5,.5) circle (.07cm);
\fill[black] (3.5,-.5) circle (.07cm);
\fill[black] (4.5,.5) circle (.07cm);
\fill[black] (4.5,-.5) circle (.07cm);
\end{tikzpicture}
&
\begin{tikzpicture}[scale=0.85]
\draw (-.883,-.883) -- (.883,.883); 
\draw (-1.25,0) -- (1.25,0); 
\fill[black] (0,0) circle (.07cm);
\draw[dashed] (0,0) circle (1.25cm);
\draw[->] (2,0) -- (3,0);
\draw (3.5,0) -- (3.5,1) -- (4.5,0) -- (4.5,-1) -- cycle;
\fill[black] (3.5,0) circle (.07cm);
\fill[black] (3.5,1) circle (.07cm);
\fill[black] (4.5,0) circle (.07cm);
\fill[black] (4.5,-1) circle (.07cm);
\end{tikzpicture}
\\
\begin{tikzpicture}[scale=0.85]
\draw (-.883,-.883) -- (.883,.883); 
\draw (0,-1.25) -- (0,1.25); 
\draw (-1.25,0) -- (1.25,0); 
\fill[black] (0,0) circle (.07cm);
\draw[dashed] (0,0) circle (1.25cm);
\draw[->] (2,0) -- (3,0);
\draw (3.5,1) -- (4.5,1) -- (5.5,0) -- (5.5,-1) -- (4.5,-1) -- (3.5,0) -- cycle;
\fill[black] (3.5,1) circle (.07cm);
\fill[black] (4.5,1) circle (.07cm);
\fill[black] (5.5,0) circle (.07cm);
\fill[black] (5.5,-1) circle (.07cm);
\fill[black] (4.5,0) circle (.07cm);
\fill[black] (4.5,-1) circle (.07cm);
\fill[black] (3.5,0) circle (.07cm);
\end{tikzpicture}
\end{tabular}\caption{The nine possible local images of the corner locus of $F$ for arrangements without coaxial centers. Inside each dashed circle is a single point $p$ and the rays and lines through it, and to the right is the cell this local arrangement induces in the Newton subdivision.}\label{Nine local cases}
\end{center}\end{figure}
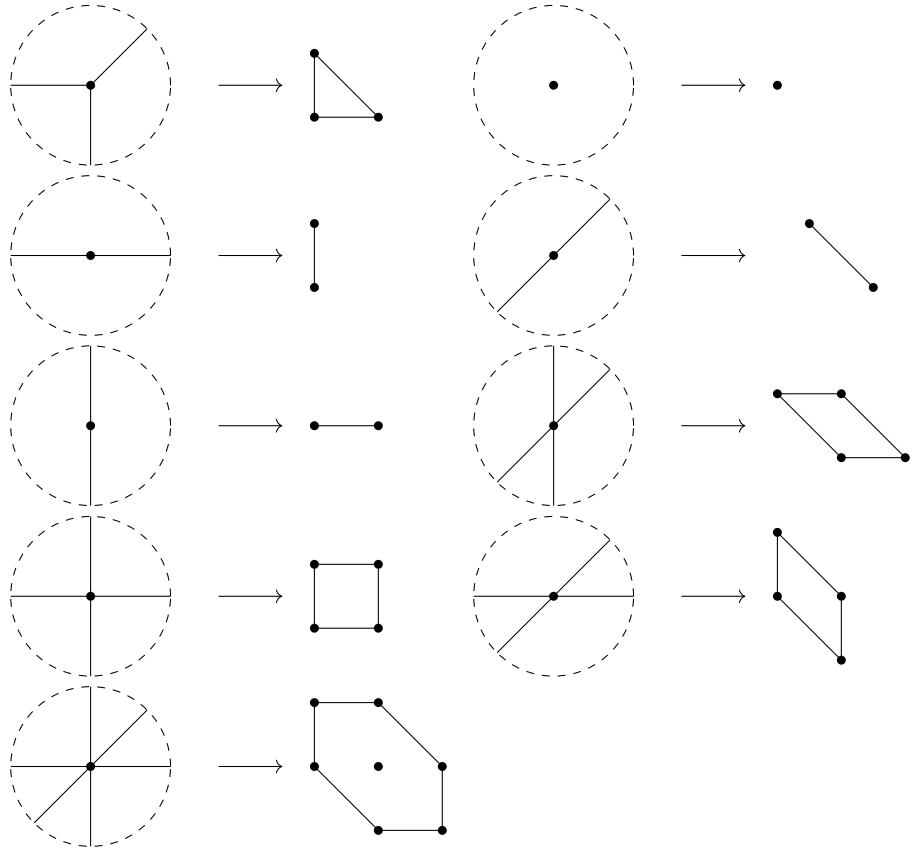

\begin{remark}
A linear Newton subdivision is meant to axiomatize the incidence data of a line arrangement, in the same manner that matroids axiomatize Euclidean line arrangements. The subdivision encodes the number of lines, which collections of lines meet, the axes along which the intersections lie, and the order of the intersection points along each axis of a given line. This incidence data can alternatively be recorded in an abstract combinatorial structure known as a \textbf{tropical oriented matroid}, defined and studied by Ardila and Develin~\cite{Ardila2009,Horn16,OhYoo}. The equivalence of these perspectives is established in their work. Our constructions of universal subdivisions are most natural from the perspective of linear Newton subdivisions, so we avoid the language of tropical oriented matroids. By the topological realization theorem for tropical oriented matroids, these may also be thought of as arrangements of tropical ``pseudolines'', namely the images of tropical lines under piecewise linear homeomorphisms~\cite{Horn16}. 
\end{remark}

\section{Tropical Sylvester--Gallai}
\label{tsg}

The purpose of this section is to establish Theorem~\ref{thm: tropical-SG}. This is split into two cases, dealing separately with arrangements of points containing a pair of coaxial points and those that do not.

\subsection{Coaxial case}

\begin{lemma}
Any set of points $\mathcal P$ in the tropical plane containing a pair of coaxial points determines an ordinary tropical line.
\end{lemma}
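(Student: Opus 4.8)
The plan is to give a direct, elementary construction that exploits the fact that coaxiality forces several points of $\mathcal P$ onto a single axis-parallel line, along which I then slide the center of a new tropical line until it is generic.

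First I would record the coordinate meaning of coaxiality: two distinct points are coaxial precisely when they share a value of $x$, of $y$, or of $x-y$, corresponding respectively to the axis directions $e_3$, $e_2$, and $e_1$. Since the group $S_3$ permuting $\{e_1,e_2,e_3\}$ is realized by lattice-preserving linear automorphisms of the plane that carry tropical lines to tropical lines (for instance $(x,y)\mapsto(y,x)$ swaps the $e_2$ and $e_3$ directions while fixing $e_1$), I may assume without loss of generality that the given coaxial pair shares a $y$-coordinate $y_0$. Consequently the horizontal line $H=\{y=y_0\}$ contains at least two points of $\mathcal P$; let $x_1<x_2<\cdots<x_k$, with $k\geq 2$, be their distinct $x$-coordinates.

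Next I would build the candidate ordinary line. Consider the tropical line $\ell_a$ centered at $(a,y_0)$, and observe that its $e_2$-axis is exactly the leftward ray $\{(x,y_0):x\leq a\}$, while its $e_1$- and $e_3$-axes meet $H$ only at the center. Thus $\ell_a$ meets $H$ in precisely the points of $H$ with $x$-coordinate at most $a$. Choosing $a$ in the open interval $(x_2,x_3)$, or in $(x_2,\infty)$ if $k=2$, forces the $e_2$-axis to capture exactly the two points at $x_1$ and $x_2$ and none of the others on $H$; moreover the center $(a,y_0)$ is itself not a point of $\mathcal P$, since $a\neq x_i$ for every $i$.

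It then remains only to choose $a$ so that no further point of $\mathcal P$ lies on the other two axes of $\ell_a$, and this genericity step is the only real content of the argument. A point $r\in\mathcal P$ can lie on the $e_3$-axis $\{(a,y):y\leq y_0\}$ only if $a=r_x$, and on the $e_1$-axis $\{(a+t,y_0+t):t\geq 0\}$ only if $a=r_x-r_y+y_0$; hence each point of the finite set $\mathcal P$ forbids at most two values of $a$. Since the interval from which $a$ is drawn contains infinitely many reals, a permissible value exists, and for any such $a$ the line $\ell_a$ passes through exactly the two points $(x_1,y_0)$ and $(x_2,y_0)$, which is the required ordinary tropical line. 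The main point to be careful about is that one must use the two \emph{leftmost} collinear points rather than the originally given coaxial pair, since the $e_2$-axis necessarily sweeps out an entire left-prefix of $H$; once that is arranged, the rest is the finite count of forbidden center positions.
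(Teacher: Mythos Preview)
Your proof is correct and follows essentially the same approach as the paper: both arguments reduce by symmetry to a fixed axis direction, take the two extreme points of $\mathcal P$ along the shared axis, and then slide the center of a tropical line in a one-parameter family, using the finiteness of $\mathcal P$ to avoid the finitely many positions at which a stray point would land on one of the other two rays. The only cosmetic difference is that the paper works along the $e_1$-axis and selects the two \emph{highest} coaxial points, whereas you work along the $e_2$-axis and select the two \emph{leftmost}; your write-up is somewhat more explicit about the forbidden-value count than the paper's.
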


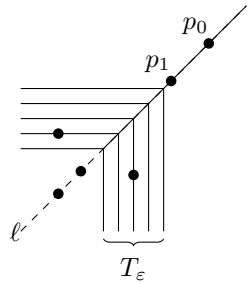
\begin{figure}[h!]
\begin{center}
\begin{tikzpicture}
\fill[black] (1,1) circle (.07cm) node[align=left,   above]{$p_0\quad$};
\fill[black] (0.5,0.5) circle (.07cm) node[align=left,   above]{$p_1\quad$};
\fill[black] (-.7,-.7) circle (.07cm);
\fill[black] (-1,-1) circle (.07cm);

\fill[black] (-1,-.2) circle (.07cm);
\fill[black] (0,-.75) circle (.07cm);

\draw[dashed] (-1.5,-1.5) -- (1.5,1.5) node[pos=0,xshift=-2pt]{$\ell$};

\draw (-1.5,-.4) -- (-.4,-.4) -- (1.5,1.5);
\draw (-.4,-1.5) -- (-.4,-.4);

\draw (-1.5,-0.2) -- (-0.2,-0.2);
\draw (-0.2,-1.5) -- (-0.2,-0.2);

\draw (-1.5,0) -- (0,0);
\draw (0,-1.5) -- (0,0);

\draw (-1.5,0.2) -- (0.2,0.2);
\draw (0.2,-1.5) -- (0.2,0.2);

\draw (-1.5,0.4) -- (0.4,0.4);
\draw (0.4,-1.5) -- (0.4,0.4);

\draw [decorate,decoration={brace,mirror},yshift = -.1cm] (-.4,-1.5) -- (.4,-1.5) node [black,midway,yshift = -.4cm] 
{$T_{\varepsilon}$};

\end{tikzpicture}
\end{center}
\caption{The coaxial case, showing several lines $T_{\varepsilon}$ as $\varepsilon$ varies.}
\label{coaxialProof}
\end{figure}

\begin{proof}
Consider the set of all lines determined by the point set $\mathcal P$. There is an infinite collection $\mathcal L_{12}$ of such lines passing through the coaxial points $p_1$ and $p_2$. Consider another point $p$. If $p$ is not coaxial with $p_1$ and $p_2$, then there is at most one line in $\mathcal L_{12}$ containing $p$. In this case, we see that an infinite collection of lines in $\mathcal L_{12}$ do not contain $p$. Since $p$ was an arbitrary non-coaxial point, there exists an infinite collection of lines in $\mathcal L_{12}$ that only contains points of $\mathcal P$ on a single axis. After a change of coordinates, we may assume that the coaxial points are $\{\lambda_i\cdot (1,1)\}_i$ where $\lambda_i$ is a finite set $S$ of positive real numbers. As illustrated in Figure~\ref{coaxialProof}, there is an ordinary tropical line passing through $\lambda_j\cdot(1,1)$ and $\lambda_k\cdot (1,1)$, where $\lambda_j$ and $\lambda_k$ are taken to be the two highest values appearing in the set $S$. This concludes the proof.
\end{proof}

\subsection{Non-coaxial case}
A line arrangement $\mathcal L$ determines an \textbf{ordinary point} $p$ if there are exactly two tropical lines in $\mathcal L$ containing $p$. By point-line duality, the statement that an arrangement of points with no coaxial pairs determines an ordinary tropical line is equivalent to the following lemma.

\begin{lemma}
Let $\mathcal L$ be an arrangement of tropical lines such that no two centers of lines in $\mathcal L$ are coaxial. Then $\mathcal L$ determines an ordinary point.
\end{lemma}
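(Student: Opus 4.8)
The plan is to argue entirely through the Newton subdivision of $\Delta_n$ attached to $\mathcal L$, where $n=|\mathcal L|$. By the enumeration of local pictures in Figure~\ref{Nine local cases}, in the absence of coaxial centers the two-dimensional cells are of exactly three kinds: a unit triangle $P_{1,0,0,0}$ at each center, a hexagon $P_{0,1,1,1}$ wherever three lines meet, and a parallelogram $P_{0,w_1,w_2,w_3}$ (two of the $w_i$ equal to $1$) wherever exactly two lines cross transversally. A point is ordinary for $\mathcal L$ precisely when its dual cell is a parallelogram, so the lemma is equivalent to the claim that the subdivision contains a parallelogram. Each center contributes exactly one triangle, giving exactly $n$ triangles; I must therefore exclude the possibility that every crossing is triple, i.e. that all remaining cells are hexagons. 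One should note that pure Euler/area bookkeeping does not suffice here (the face, edge, and vertex counts are consistent with having no parallelograms), so the convexity of the arrangement must genuinely enter.

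The first substantive step is a corner analysis. The corner $(0,n)$ of $\Delta_n$ is dual to the region $R=\{\,y\ge \max_\ell b_\ell,\ y-x\ge \max_\ell(b_\ell-a_\ell)\,\}$ on which every $f_\ell$ takes its $y-b_\ell$ branch. This $R$ is a wedge with a single apex, bounded by the $e_2$-axis of the line maximizing $b_\ell$ and the $e_1$-axis of the line maximizing $b_\ell-a_\ell$. No line passes through the interior of $R$, and a direct check shows the vertical direction points into $R$; hence no third line can meet the apex, so the apex is a center (when one line maximizes both quantities) or an ordinary crossing of two lines. Consequently the unique cell at this corner is a triangle or a parallelogram, never a hexagon. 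Running the identical argument at $(n,0)$ and $(0,0)$ shows every corner cell is a triangle or parallelogram. If any corner cell is a parallelogram, we are done.

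It remains to treat the case where all three corner cells are triangles. Then there is a unique line $\ell_B$ maximizing both $b_\ell$ and $b_\ell-a_\ell$, and I would prove the key confinement lemma that \emph{all} crossings of $\ell_B$ lie on its downward $e_3$-axis $\{x=a_{\ell_B}\}$: a competing line can meet the $e_1$- or $e_2$-axis of $\ell_B$ only if it exceeds $\ell_B$ in $b_\ell$ or in $b_\ell-a_\ell$, which maximality forbids. With this in hand I would induct on $n$: deleting $\ell_B$ leaves a non-coaxial arrangement $\mathcal L'$ of $n-1$ lines, which by induction determines an ordinary point $p$; if $\ell_B$ misses $p$, then $p$ remains ordinary for $\mathcal L$. (The base cases $n\le 3$ are immediate once three concurrent lines, the single genuinely exceptional configuration, are set aside.)

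The main obstacle is closing this induction: a priori the point $p$ could lie on $\{x=a_{\ell_B}\}$, so that restoring $\ell_B$ converts $p$ into a triple point. Here the confinement lemma should be decisive. Since no center of $\mathcal L'$ has first coordinate $a_{\ell_B}$, a crossing of $\mathcal L'$ can lie on $\{x=a_{\ell_B}\}$ only if it is an $e_1\cap e_2$ crossing; thus the bad case forces all ordinary points of $\mathcal L'$ onto one vertical line and into a single combinatorial type. I would rule this out using the threefold symmetry cycling the axis directions: the corner lines $\ell_A$ and $\ell_C$ confine their crossings to an $e_2$- and an $e_1$-axis respectively, so deleting a different corner line places the obstruction on a line of a different slope, and the three obstructions cannot all hold at once. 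Making this incompatibility precise—or else replacing the induction by a direct extremal selection of an ordinary crossing—is the crux I expect to require the most care.
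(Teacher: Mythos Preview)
Your setup through the corner analysis is correct and matches the paper: the Newton subdivision is semiuniform, ordinary points correspond to parallelograms, and each corner cell is a triangle or a parallelogram. But from that point on your argument has a genuine gap, and you already put your finger on it. The induction does not close: when you delete $\ell_B$ and invoke the inductive hypothesis, nothing prevents the ordinary point $p$ of $\mathcal L'$ from lying on the $e_3$-axis of $\ell_B$, and your proposed symmetry rescue does not work as stated. Deleting $\ell_A$, $\ell_B$, $\ell_C$ yields three \emph{a priori different} ordinary points $p_A$, $p_B$, $p_C$, and there is no contradiction in each $p_X$ lying on the forbidden axis of the corresponding $\ell_X$; the ``three obstructions cannot all hold at once'' claim would need a common object on which they conflict, and you have not produced one. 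There is also a base-case leak: deleting a corner line from a $4$-line arrangement can leave three concurrent lines, which is exactly the configuration you set aside, so the $n=4$ step has no inductive foothold.

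The paper sidesteps all of this with a short direct argument on the subdivision itself, never returning to the line arrangement or using induction on $n$. Assume for contradiction that the semiuniform subdivision of $\Delta_n$ (with $n\ge 4$) contains only unit triangles and hexagons. The bottom-left corner is then a triangle; its diagonal edge must be the hypotenuse-side of a hexagon; the two outer diagonal edges of that hexagon must in turn be hypotenuses of triangles. At this stage one has produced a diagonal staircase of three consecutive $\bar e_1$-edges, each of which would have to be the base of a hexagon, but three such hexagons necessarily overlap (see the paper's Figure~\ref{fiveshapes}). That is the entire proof. The moral is that the obstruction is local and purely combinatorial in the subdivision; no extremal choice of line and no induction is needed.
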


\begin{proof}
Let $\mathcal L$ be an arrangement without coaxial centers. Let $N$ be the associated Newton subdivision of the simplex. The subdivision is clearly linear, and in fact semiuniform, since we assume that the centers are not coaxial. From Figure \ref{Nine local cases}, observe that ordinary points determined by $\mathcal L$ correspond to parallelograms in $N$. Thus, the condition that $\mathcal L$ determine no ordinary points is equivalent to the condition that $N$ is composed of only triangles and hexagons. We show that there is no subdivision of $\Delta_n$ for $n\geq 4$ containing only standard triangles of lattice side length $1$ and hexagons. 

Suppose there were such a subdivision. Then, there must be a triangle in the bottom left corner. The slanted edge of the triangle must be the base of a hexagon and two of the edges of this hexagon must be the base of triangles, as illustrated in Figure \ref{fiveshapes}. After adding these faces, there is a slanted edge composed of three diagonal segments. Each of these segments must be the base of a hexagon, but this is impossible because these hexagons would overlap. Thus no such subdivision exists.\end{proof}

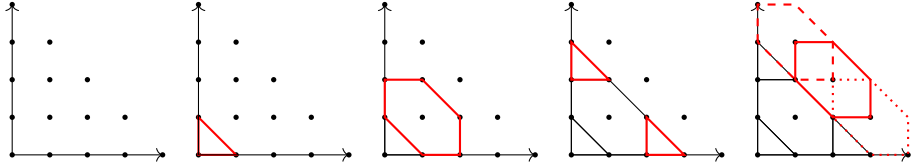
\begin{figure}[h!]
\begin{center}
\begin{tabular}{l c c c r}
\begin{tikzpicture} [scale=.5]
\draw[->] (0,0) -- (0,4);
\draw[->] (0,0) -- (4,0); 
\grid{4}
\end{tikzpicture}
&

\begin{tikzpicture} [scale = .5]
\grid{4}
\draw[red,thick] (0,0) -- (0,1) -- (1,0) -- cycle;
\draw[->] (0,0) -- (0,4);
\draw[->] (0,0) -- (4,0); 
\end{tikzpicture}
&
\begin{tikzpicture} [scale = .5]
\grid{4}
\draw (0,0) -- (0,1) -- (1,0) -- cycle;
\draw[->] (0,0) -- (0,4);
\draw[->] (0,0) -- (4,0); 
\draw[red,thick] (1,0) -- (0,1) -- (0,2) -- (1,2) -- (2, 1) -- (2,0) -- cycle;
\end{tikzpicture}
&

\begin{tikzpicture} [scale = .5]
\grid{4}
\draw (0,0) -- (0,1) -- (1,0) -- cycle;
\draw[->] (0,0) -- (0,4);
\draw[->] (0,0) -- (4,0); 
\draw (1,0) -- (0,1) -- (0,2) -- (1,2) -- (2, 1) -- (2,0) -- cycle;
\draw[red,thick] (0,2) -- (0,3) -- (1,2) -- cycle;
\draw[red,thick] (2,0) -- (2, 1) -- (3,0) -- cycle;
\end{tikzpicture}
&
\begin{tikzpicture} [scale = .5]
\grid{4}
\draw (0,0) -- (0,1) -- (1,0) -- cycle;
\draw[->] (0,0) -- (0,4);
\draw[->] (0,0) -- (4,0); 
\draw (1,0) -- (0,1) -- (0,2) -- (1,2) -- (2, 1) -- (2,0) -- cycle;
\draw (0,2) -- (0,3) -- (1,2) -- cycle;
\draw (2,0) -- (2, 1) -- (3,0) -- cycle;
\draw[red,thick,dashed] (0,3)--(0,4)--(1,4)--(2,3)--(2,2)--(1,2)--cycle;
\draw[red, thick] (1,2) -- (1,3)--(2,3)--(3,2)--(3,1)--(2,1)--cycle;
\draw[red,thick,dotted] (2,1)--(2,2)--(3,2)--(4,1)--(4,0)--(3,0)--cycle;
\end{tikzpicture}
\end{tabular}
\end{center}\caption{The faces forced in a subdivision avoiding parallelograms, ending with three overlapping hexagons.}
\label{fiveshapes}
\end{figure}
\subsection{Bounds for the number of ordinary tropical lines}
\label{bnds}
\begin{theorem}
Any semiuniform linear subdivision of $\Delta_n$ contains at least $n-3$ parallelograms if $n$ is a multiple of $3$ and at least $n-1$ otherwise. This bound is sharp.
\label{boundsandstuff}
\end{theorem}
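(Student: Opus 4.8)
The plan is to reduce the statement to a count of the hexagonal cells and then to bound that count. The starting point is an exact count of the triangles. Every edge of a semiuniform linear subdivision is parallel to one of $\bar e_1,\bar e_2,\bar e_3$ and has lattice length one. I would count the edges parallel to $\bar e_1=(-1,1)$, that is, those lying on an anti-diagonal $\{x+y=\mathrm{const}\}$. Each such edge is the top edge (the edge at the largest value of $x+y$) of the unique cell lying below it, and among our cells the triangle $P_{1,0,0,0}$, the parallelograms $P_{0,1,1,0}$ and $P_{0,1,0,1}$, and the hexagon $P_{0,1,1,1}$ each contribute exactly one top $\bar e_1$-edge, whereas $P_{0,0,1,1}$ contributes none. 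Hence the total number $E_1$ of $\bar e_1$-edges equals $t+q+h$, where $t,h$ count triangles and hexagons and $q$ counts the two relevant parallelogram types. Dually, of the $E_1$ edges exactly $n$ lie on the hypotenuse, and each of the remaining interior ones is the bottom $\bar e_1$-edge of a unique cell above it; only the two parallelogram types and the hexagon have a bottom $\bar e_1$-edge, so $E_1-n=q+h$. Subtracting gives $t=n$: every semiuniform linear subdivision of $\Delta_n$ has exactly $n$ triangles.

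Comparing normalized (lattice) areas—$1$, $2$, $6$ for a triangle, parallelogram, hexagon, and $n^2$ for $\Delta_n$—then yields $n+2p+6h=n^2$, i.e. $p=\binom n2-3h$. Thus minimizing parallelograms is the same as maximizing hexagons, and the theorem is equivalent to $h\le\tfrac13\bigl(\binom n2-(n-3)\bigr)$ when $3\mid n$ and $h\le\tfrac13\bigl(\binom n2-(n-1)\bigr)$ otherwise. In the dual arrangement this is transparent: a hexagon is a triple point, a parallelogram an ordinary point, and $\binom n2=p+3h$ simply records that each of the $\binom n2$ pairs of lines meets once. The theorem therefore asks for the tropical analogue of the orchard problem—how many triple points can $n$ tropical lines have.

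For the lower bound on $p$ I would induct in steps of three, proving $p(n)\ge p(n-3)+3$ for $n\ge4$, where $p(n)$ is the minimum over all subdivisions; with the base values $p(1)=0,\ p(2)=1,\ p(3)=0$ this gives exactly $n-3$ for $3\mid n$ and $n-1$ otherwise. The engine of the inductive step is the same planarity obstruction used in the non-coaxial lemma above: a down-pointing triangle is not an admissible cell, so three hexagons that meet only pairwise in points must enclose a forbidden down-triangle. This is essential, because the purely local constraints are far too weak: the lengths of the anti-diagonal, horizontal, and vertical slices of $\Delta_n$ only force $h\le\binom n2/2$, and already for $n=5$ one can place three pairwise-touching hexagons satisfying every slice-length equation, yet these cannot be completed to a subdivision. \emph{I expect the inductive step to be the main obstacle.} One must identify a canonical outer collar of $\Delta_n$ whose removal leaves a genuine semiuniform subdivision of $\Delta_{n-3}$ and whose interaction with the forbidden-hole constraint accounts for precisely three new parallelograms; controlling this peeling while excluding the ``kissing'' hexagon clusters is where the real work lies, and it is what produces the dependence on $n\bmod 3$.

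Finally, for sharpness I would give an explicit extremal family. Place triple points (hexagons) at the lattice points of the sublattice $\{u\equiv v\ (\mathrm{mod}\ 3)\}$ lying in the interior region $\{u,v\ge1,\ u+v\le n-1\}$; these hexagons are mutually edge-adjacent and tile a large inner triangle, and the surrounding collar is filled by the $n$ forced triangles together with a minimal number of parallelograms. When $3\mid n$ the three corners of the inner triangle land on the sublattice, the collar closes up optimally, and only $n-3$ parallelograms are required; when $3\nmid n$ the sublattice is offset from the boundary and two additional parallelograms are forced, yielding $n-1$. Checking that these arrangements are indeed semiuniform linear subdivisions and that they realize the claimed counts completes the proof that the bound is attained.
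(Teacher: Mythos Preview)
Your triangle count $t=n$, the area identity $p=\binom n2-3h$, and the sharpness construction all agree with the paper's treatment, and your mod $3$ refinement is exactly the paper's. The genuine gap is the lower bound $p\ge n-3$. You propose to induct in steps of three by peeling an outer collar and proving $p(n)\ge p(n-3)+3$, but you yourself flag this as ``the main obstacle,'' and no argument is actually given. There is no reason to expect that an arbitrary semiuniform subdivision of $\Delta_n$ admits a collar whose removal is again a semiuniform subdivision of $\Delta_{n-3}$; the boundary behaviour of the extremal examples is not canonical enough to support such a peeling in general, and your remarks about forbidden down-triangles and kissing hexagons do not address this.

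The paper avoids induction entirely with a short boundary argument. Delete from the $3n$ boundary edges of $\Delta_n$ the edge immediately counterclockwise of each corner, leaving a set $E$ of $3n-3$ edges. Each edge of $E$ lies in a unique cell of the subdivision, so $E$ partitions as $E_T\cup E_P\cup E_H$ according to whether that cell is a triangle, parallelogram, or hexagon. Since $t=n$, one has $|E_T|\le n$. The key step is an injection $E_H\to E_T$: if $e\in E_H$, let $d$ be the edge of that hexagon meeting the boundary at the counterclockwise endpoint of $e$; the next boundary edge $e'$ must then sit under either a triangle or a parallelogram with sides parallel to $e$ and $d$, and in the latter case the same alternative recurs for $e''$, and so on. Such a parallelogram cannot occur at the last edge of a side of $\Delta_n$, so a triangle is eventually reached; send $e$ to that first triangle-edge. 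All intermediate edges lie in $E_P$, so the map is injective. Hence $|E_H|\le|E_T|\le n$ and $|E_P|\ge(3n-3)-2n=n-3$. Since a parallelogram contains at most one edge of $E$, this gives at least $n-3$ parallelograms, and your mod $3$ argument then upgrades this to $n-1$ when $3\nmid n$.
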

\begin{proof} \textbf{The $n-3$ bound}. It suffices to consider the boundary edges of $\Delta_n$. Denote $E$ to be the set of $3n-3$ boundary edges remaining after removing each edge immediately counterclockwise of a vertex of the triangle, as shown in Figure \ref{orientedtriangle}.

\begin{figure}[h!]
\begin{minipage}{0.495\textwidth}
\centering
\begin{tikzpicture}
\draw[black,ultra thin] (0,0) -- (3,0) -- (0,3) -- cycle;
\draw[red,ultra thick] (1,0) -- (3,0);
\draw[red,ultra thick] (2,1) -- (0,3);
\draw[red,ultra thick] (0,2) -- (0,0);
\grid{3}
\end{tikzpicture}
\caption{The boundary of $\Delta_3$, with the edges in $E$ highlighted in red bold.}
\label{orientedtriangle}
\end{minipage}
\hfill
\begin{minipage}{0.495\textwidth}
\centering
\begin{tikzpicture}
\draw[red,very thick] (0,0) -- (1,0) node [black,midway,yshift=-.425cm] 
{$e$};
\draw[black,very thick] (1,0) -- (1,1) node[black,midway,xshift=.15cm,yshift=.1cm]{$d$};\draw[black] (1,1) -- (0,2) -- (-1,2) -- (-1,1) -- (0,0);
\draw[black] (1,0) -- (2,0) node [black,midway,yshift=-.42cm] 
{$e'$};
\draw[black] (2,0) -- (2,1) -- (1,1);
\draw[black] (2,0) -- (3,0) node [black,midway,yshift=-.42cm] 
{$e''$};
\draw[black] (3,0) -- (3,1) -- (2,1);
\draw[blue,very thick] (3,0) -- (4,0) node [black,midway,yshift=-.425cm] 
{$f(e)$};
\draw[black] (4,0) -- (3,1);

\draw[black,dashed,->] (0.5,-.25) -- (.5,.25) -- (3.5,.25) -- (3.5,-.25);
\draw[black,->] (0,-.65) -- (4,-.65);

\end{tikzpicture}
\caption{The construction of $f(e)$ from $e$.}
\label{injection}
\end{minipage}
\end{figure}

Each face of the subdivision contains at most one edge in $E$ and each edge in $E$ is contained in precisely one face. Define three subsets $E_H,E_T,E_P\subseteq E$, consisting of the edges contained in hexagons, triangles, and parallelograms respectively. Note $\vert E_T \vert \leq n$ because there are exactly $n$ triangles in any semiuniform subdivision of $\Delta_n$.

We describe an injection $f:E_H\rightarrow E_T$. Suppose that $e\in E_H$. Denote $e'$ to be the edge on the boundary immediately counterclockwise of $e$. Note that $e$ and $e'$ must be on the same side of $\Delta_n$. Let $d$ be the other edge of the hexagon containing the vertex $e'\cap e$, as shown in Figure \ref{injection}. Either $e'$ is the base of a parallelogram with edges parallel to $e$ and $d$, or it is the base of a triangle. If $e'$ is not the base of a triangle, then by the same argument $e''$ is either the base of a parallelogram or the base of a triangle. As the final edge on the side of the boundary cannot be the base of a parallelogram with edges parallel to $e$ and $d$, there must be some successor of $e$ that is the base of a triangle. Define $f(e)$ to be the first successor of $e$ that is the base of a triangle. Since all the edges between $e$ and $f(e)$ are in $E_P$, $f$ is injective.

As there is an injection $E_H\rightarrow E_T$ we have $\vert E_H \vert\leq \vert E_T \vert\leq n$. Since $\{E_H,E_T,E_P\}$ is a partition of $E$, we have $\vert E_H \vert + \vert E_T \vert + \vert E_P \vert = \vert E \vert = 3n-3$, which implies $\vert E_P \vert\geq n-3$. Any parallelogram can only include a single edge of $E$, so every subdivision must include at least $n-3$ parallelograms on its boundary.

\textbf{A mod $3$ refinement.} The area of $\Delta_n$ is $\frac{1}2n^2$, the area of a triangle is $\frac{1}2$, the area of a parallelogram is $1$, and the area of a hexagon is $3$. Therefore, if there are $h$ hexagons and $p$ parallelograms in a subdivision of $\Delta_n$, we must have $3h+p+\frac{1}2n = \frac{1}2n^2$. Rearranging and taking the equation modulo $3$ yields $p\equiv \frac{1}2(n^2-n)\pmod{3}$. Therefore $p\equiv 0\pmod{3}$ if $n\equiv 0$ or $1\pmod{3}$ and $p\equiv 1\pmod{3}$ otherwise. If $n$ is not divisible by $3$, this condition combined with the $n-3$ bound implies that there are at least $n-1$ parallelograms in the subdivision.

\textbf{Sharpness of the bound}. We can obtain this bound by filling in as much of the triangle as possible with a hexagonal subdivision beginning with a hexagon centered at $(1,1)$. Figure \ref{optimaltilings} illustrates the optimal subdivision for some small $n$. More formally, consider placing a hexagon centered at the coordinates $(a+2b,a-b)$ for every $a,b\in \mathbb Z$ such that this hexagon fits entirely within $\Delta_n$. 

Every point a distance of at least $1$ from the boundary will be within the union of these hexagons. Moreover, every space left uncovered by these hexagons will be filled by parallelograms and triangles. It is clear that such a subdivision has $n-1$ parallelograms if $n\equiv 1$ or $2\pmod{3}$ and $n-3$ parallelograms otherwise.

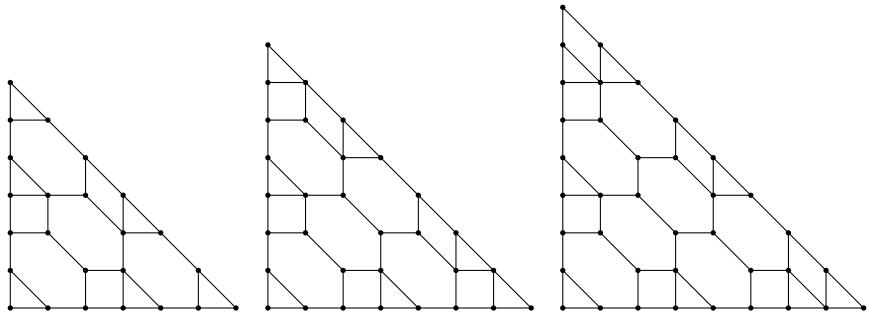
\begin{figure}[h!]
\begin{center}
\begin{tabular}{ccc}
\begin{tikzpicture}[scale=.5]

\grid{6}
\draw[black] (0,0) -- (0,6) -- (6,0) -- cycle;
\draw[black] (1,0) -- (0,1);

\draw[black] (2,0) -- (2,1) -- (1,2) -- (0,2);
\draw[black] (1,2) -- (1,3) -- (2,3) -- (3,2) -- (3,1) -- (2,1);
\draw[black] (1,3) -- (0,4);
\draw[black] (2,3) -- (2,4);
\draw[black] (0,5) -- (1,5);

\draw[black] (3,1) -- (4,0);
\draw[black] (3,2) -- (4,2);
\draw[black] (5,0) -- (5,1);

\draw[black] (0,3) -- (1,3);
\draw[black] (3,0) -- (3,1);
\draw[black] (3,2) -- (3,3);

\fill[white] (1,1) circle (3pt);
\fill[white] (2,2) circle (3pt);
\fill[white] (4,1) circle (3pt);
\fill[white] (1,4) circle (3pt);
\end{tikzpicture} &
\begin{tikzpicture}[scale=.5]

\grid{7}
\draw[black] (0,0) -- (0,7) -- (7,0) -- cycle;
\draw[black] (1,0) -- (0,1);

\draw[black] (2,0) -- (2,1) -- (1,2) -- (0,2);
\draw[black] (1,2) -- (1,3) -- (2,3) -- (3,2) -- (3,1) -- (2,1);
\draw[black] (1,3) -- (0,4);
\draw[black] (2,3) -- (2,4);
\draw[black] (0,5) -- (1,5);

\draw[black] (1,6) -- (1,5) -- (2,4) -- (3,4);

\draw[black] (6,1) -- (5,1) -- (4,2) -- (4,3);

\draw[black] (3,1) -- (4,0);
\draw[black] (3,2) -- (4,2);
\draw[black] (5,0) -- (5,1);

\draw[black] (0,3) -- (1,3);
\draw[black] (3,0) -- (3,1);

\draw[black] (0,6) -- (1,6);
\draw[black] (2,4) -- (2,5);
\draw[black] (6,0) -- (6,1);
\draw[black] (5,1) -- (5,2);

\fill[white] (1,1) circle (3pt);
\fill[white] (2,2) circle (3pt);
\fill[white] (3,3) circle (3pt);
\fill[white] (4,1) circle (3pt);
\fill[white] (1,4) circle (3pt);
\end{tikzpicture} &
\begin{tikzpicture}[scale=.5]

\grid{8}
\draw[black] (0,0) -- (0,8) -- (8,0) -- cycle;
\draw[black] (1,0) -- (0,1);

\draw[black] (2,0) -- (2,1) -- (1,2) -- (0,2);
\draw[black] (1,2) -- (1,3) -- (2,3) -- (3,2) -- (3,1) -- (2,1);
\draw[black] (1,3) -- (0,4);
\draw[black] (2,3) -- (2,4);
\draw[black] (0,5) -- (1,5);
\draw[black] (1,6) -- (2,6);
\draw[black] (6,1) -- (6,2);
\draw[black] (3,5) -- (3,4) -- (4,3) -- (5,3);

\draw[black] (1,6) -- (1,5) -- (2,4) -- (3,4);

\draw[black] (6,1) -- (5,1) -- (4,2) -- (4,3);

\draw[black] (3,1) -- (4,0);
\draw[black] (3,2) -- (4,2);
\draw[black] (5,0) -- (5,1);

\draw[black] (0,3) -- (1,3);
\draw[black] (3,0) -- (3,1);

\draw[black] (0,6) -- (1,6);
\draw[black] (6,0) -- (6,1);

\draw[black] (1,6) -- (0,7);
\draw[black] (1,6) -- (1,7);
\draw[black] (4,3) -- (4,4);
\draw[black] (7,0) -- (7,1);
\draw[black] (7,0) -- (6,1);

\fill[white] (1,1) circle (3pt);
\fill[white] (2,2) circle (3pt);
\fill[white] (3,3) circle (3pt);
\fill[white] (4,1) circle (3pt);
\fill[white] (5,2) circle (3pt);
\fill[white] (1,4) circle (3pt);
\fill[white] (2,5) circle (3pt);

\end{tikzpicture}
\end{tabular}
\end{center}
\caption{Subdivision achieving the optimal bound on parallelograms for $n=6,7,8$.}
\label{optimaltilings}
\end{figure}
\end{proof}
\begin{remark}
Theorem \ref{boundsandstuff} gives a lower bound for the number of ordinary points determined by an arrangement of $n$ tropical lines. However, the bound is not sharp because for $n=8$ or $n\geq 10$, our optimal subdivision method does not produce a subdivision corresponding to any line arrangement. The question of which subdivisions are realizable is further explored in Section \ref{universalitySection}. Note however, that if one asks the question for arrangements of tropical pseudolines, i.e. the images of tropical lines under piecewise linear homeomorphisms, the arrangements described above are optimal and can all be realized~\cite{Horn16}.
\end{remark}

\section{Tropical Motzkin--Rabin}\label{sec: motzkin-rabin}

We now consider the tropical Motzkin--Rabin theorem, which concerns two-coloured arrangements of tropical lines in $\mathbb R^2$. That is, an arrangement of lines where each line is assigned a colour, which we will take to be either red or blue. A coloured arrangement of lines $\mathcal L$ determines a \textbf{monochromatic point} $p$ if $p$ is contained by at least two lines of $\mathcal L$ and all lines containing $p$ in $\mathcal L$ are the same colour. By tropical projective duality, Theorem \ref{thm: tropical-MR} is equivalent to the following lemma.

\begin{lemma} \label{monochromatic point-line}
Any arrangement of red and blue tropical lines $\mathcal L$ in the tropical plane with no two centers coaxial determines a monochromatic point.
\end{lemma}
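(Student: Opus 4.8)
The plan is to avoid any delicate local propagation and instead force a contradiction from a global count of monochromatic and bichromatic intersections, read off from the associated Newton subdivision. Working on the arrangement side (as the lemma is already phrased), I first dispose of the degenerate case: if all lines share a colour, then every intersection point is monochromatic, and since $n\ge 4$ and no two centres are coaxial there are $\binom n2\ge 6$ intersection points, so we are done. Hence assume both colours occur, say $r$ red and $b$ blue lines with $r+b=n$ and $r,b\ge 1$, and suppose for contradiction that $\mathcal L$ determines no monochromatic point.

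The next step is to set up the dictionary between the arrangement and its semiuniform Newton subdivision $N$. Since no two centres are coaxial, the $2$-dimensional faces of $N$ are exactly triangles, parallelograms, and hexagons, as enumerated in the nine local cases. Each centre is an isolated point of the arrangement through which only its own line passes, so the triangles are in bijection with the $n$ lines; the points lying on exactly two lines are the parallelograms and those lying on exactly three lines are the hexagons (three being the maximal multiplicity in the absence of coaxial centres). A point carrying at least two lines is monochromatic precisely when the corresponding face is monochromatic, i.e. a parallelogram whose two lines agree in colour or a hexagon whose three lines agree.

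The heart of the argument is a count of unordered pairs of lines. Each pair meets in exactly one point, so sorting pairs by the face containing their intersection gives $p+3h=\binom n2$, where $p$ and $h$ denote the numbers of parallelograms and hexagons. Now consider a monochromatic pair, i.e. two lines of the same colour. Their intersection cannot be a parallelogram, since that parallelogram would be monochromatic; hence it lies in a hexagon, which by hypothesis is not monochromatic and therefore has colour split $2$–$1$, contributing exactly one monochromatic pair. Conversely every hexagon is $2$–$1$ and hosts exactly one monochromatic pair, yielding the identity
\[
h=\binom r2+\binom b2 .
\]
Combining this with the analogous count of bichromatic pairs, $rb=p+2h$ (each parallelogram carrying one and each hexagon two), gives $p=3rb-n^2+n$.

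Finally I convert this into a numerical contradiction. Since $p\ge 0$ we would need $3rb\ge n^2-n$, whereas $rb\le\lfloor n^2/4\rfloor$ always. For $n\ge 5$ one checks $3\lfloor n^2/4\rfloor<n^2-n$, so no such colouring exists. For $n=4$, any colouring with $rb\le 3$ again forces $p<0$, while the sole remaining possibility $r=b=2$ forces $p=0$; but a parallelogram-free semiuniform subdivision of $\Delta_n$ is impossible for $n\ge 4$ by the non-coaxial Sylvester--Gallai lemma, completing the contradiction. I expect the genuine obstacle to lie not in this counting, which is elementary, but in nailing down the dictionary rigorously from the nine local cases: that the only $2$-faces are triangles, parallelograms and hexagons, that triple points are of maximal multiplicity so that hexagons carry exactly three pairs, and that ``monochromatic point'' corresponds exactly to ``monochromatic face''. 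The sharp role of the threshold $n\ge 4$ is pleasant: it is precisely where the Sylvester--Gallai obstruction to triangle-and-hexagon-only subdivisions is needed.
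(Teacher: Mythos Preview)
Your proof is correct and takes a genuinely different route from the paper's. The paper argues by local propagation: it isolates a family of forbidden patterns (the $(n,m)$-arms), shows that any arm in a plausibly coloured subdivision free of monochromatic parallelograms and hexagons forces a longer arm above and to the right, and then performs a case analysis on the bottom-left corner of $\Delta_n$ to show that an arm is unavoidable when $n\ge 4$. Your argument is instead a global double count: from the semiuniform dictionary you extract $p+3h=\binom n2$, and from the hypothesis that every parallelogram is bichromatic and every hexagon is $2$--$1$ coloured you extract $h=\binom r2+\binom b2$, whence $p=3rb-n^2+n$. The inequality $rb\le\lfloor n^2/4\rfloor$ then kills every case with $n\ge 5$, and for $n=4$, $r=b=2$ you appeal to the non-coaxial Sylvester--Gallai lemma to rule out $p=0$.

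Your approach is shorter and more conceptual, and it makes the dependence on Sylvester--Gallai explicit rather than reproving a parallelogram-existence statement inside the argument. The paper's approach, by contrast, is self-contained (it does not invoke the earlier Sylvester--Gallai lemma) and its propagation machinery could in principle be adapted to situations where a clean pair-count is unavailable. Your caveat about the dictionary is well placed but unproblematic here: the non-coaxial hypothesis forces $w_i\le 1$ and forbids any line's axis from passing through another line's centre, so the $2$-faces really are exactly unit triangles, parallelograms, and hexagons, and the multiplicity at any intersection point is at most $3$.
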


\noindent For the reader's convenience, throughout this section we will represent red lines with dashed lines and blue lines with solid lines.

\subsection{Coloured  subdivisions}
We extend the method of Newton subdivisions to coloured line arrangements. Let $\mathcal L$ be an arrangement of red and blue lines with Newton polytope $\Delta$ and induced Newton subdivision $N$. Recall from Figure \ref{Nine local cases} that the edges of $N$ are in correspondence with segments of axes of tropical lines. We can lift the colouring of $\mathcal L$ to a colouring of the edges of $N$. Any subdivision arising in this manner will be linear, and satisfy the following two properties.

\begin{enumerate}
\item All edges of a triangle within a Newton division are the same colour, as they correspond to segments of axes of the same tropical line.
\item Any pair of parallel edges of a parallelogram or hexagon are the same colour, as these correspond to segments of the same axis of a tropical line (see Figure \ref{Lifting}).
\end{enumerate}

We refer to any linear subdivision of the simplex with a colouring of the edges that satisfy the rules above as a \textbf{plausibly coloured subdivision}. 

\begin{figure}
\begin{center}
\begin{tikzpicture}[scale=0.85]
\draw[red,very thick,dashed] (-.883,-.883) -- (.883,.883); 
\draw (0,-1.25) -- (0,1.25); 
\draw (-1.25,0) -- (1.25,0); 
\fill[black] (0,0) circle (.07cm);
\draw[dashed] (0,0) circle (1.25cm);
\draw[->] (2,0) -- (3,0);
\draw[black] (3.5,1) -- (4.5,1);
\draw[red,very thick,dashed] (4.5,1) -- (5.5,0);
\draw[black] (5.5,0) -- (5.5,-1) -- (4.5,-1);
\draw[red,very thick,dashed] (4.5,-1) -- (3.5,0);
\draw[black] (3.5,0) -- (3.5,1);
\fill[black] (3.5,1) circle (.07cm);
\fill[black] (4.5,1) circle (.07cm);
\fill[black] (5.5,0) circle (.07cm);
\fill[black] (5.5,-1) circle (.07cm);
\fill[black] (4.5,0) circle (.07cm);
\fill[black] (4.5,-1) circle (.07cm);
\fill[black] (3.5,0) circle (.07cm);
\end{tikzpicture}
\caption{A particular instance of lifting the colour from a tropical line to a Newton subdivision.}
\label{Lifting}
\end{center}
\end{figure}
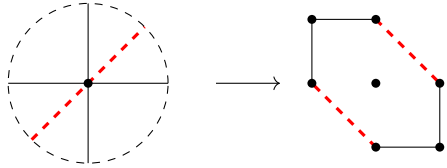

A \textbf{monochromatic polygon} is a polygon whose edges are all the same colour. If the Newton subdivision of $\mathcal L$ contains a monochromatic parallelogram or hexagon, then $\mathcal L$ determines a monochromatic point. In the following section we will prove the tropical Motzkin--Rabin theorem by showing that no semiuniform linear subdivision with side length at least $4$ avoids monochromatic hexagons and parallelograms.

\subsection{Forbidden patterns in counterexamples}
We first determine a class of patterns that may never appear in any plausibly coloured linear subdivision free of monochromatic hexagons and parallelograms. 

\begin{definition}
An \textbf{$(n,m)$-arm} is a chain of edges in a plausibly coloured subdivision starting with a red edge in the direction $-\bar{e_1}$, followed by $n$ red edges in the direction $\bar{e_3}$, then $m$ blue edges in the direction $\bar e_2$, and ending with a blue edge in the direction $-\bar{e_1}$.
\end{definition}

An $(n,m)$-arm is depicted in Figure \ref{Armsarmsarms}. This figure also illustrates the only ways in which faces may be placed above the arms while avoiding monochromatic hexagons and parallelograms. To see that these exhaust all the cases, consider building a subdivision extending the arm. Inspecting the leftmost edge, we note that it is slanted and adjacent to a horizontal edge of the same colour. Thus, only a hexagon or a parallelogram may be placed here. After this step, we are left with two edges of opposite colour intersecting perpendicularly. In particular, a triangle cannot be placed here due to the colouring rules, and the only polytope that can be placed above this edge is a square. This argument may be repeated to the end of the arm. Similar reasoning applies to the bottom-most edge.

Thus, if any $(n,m)$-arm appears in a plausibly coloured  subdivision free of monochromatic hexagons and parallelograms, the colouring rules of the subdivision force an $(n',m')$ arm above and to the right of the first arm, where $n'\in \{n,n+1\}$ and $m'\in \{m,m+1\}$. Because each arm forces another arm above and to the right of it, it is not possible for there to be finitely many arms; hence no arms can appear in such a subdivision.

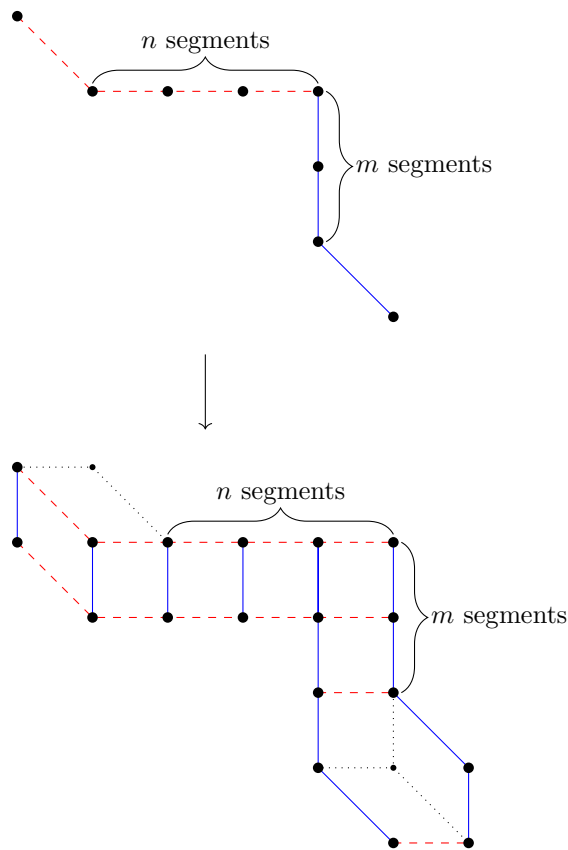
\begin{figure}
\begin{center}
\begin{tikzpicture}
\draw[red,dashed] (0,0) -- (1,-1) -- (2,-1) -- (3,-1) -- (4, -1);
\draw[blue] (4,-1) -- (4,-2) -- (4,-3) -- (5,-4);
\draw [decorate,decoration={brace,amplitude=10pt},yshift=3pt]
(1,-1) -- (4,-1) node [black,midway,yshift=0.55cm] 
{$n$ segments};
\draw [decorate,decoration={brace,amplitude=10pt,mirror},xshift=3pt]
(4,-3) -- (4,-1) node [black,midway,xshift=1.3cm] 
{$m$ segments};

\fill[black] (0,0) circle (.07cm);
\fill[black] (1,-1) circle (.07cm);
\fill[black] (2,-1) circle (.07cm);
\fill[black] (3,-1) circle (.07cm);
\fill[black] (4,-1) circle (.07cm);
\fill[black] (4,-2) circle (.07cm);
\fill[black] (4,-3) circle (.07cm);
\fill[black] (5,-4) circle (.07cm);

\draw[black,->] (2.5,-4.5) -- (2.5,-5.5);

\begin{scope}[shift={(0,-7)}]
\draw[red,dashed] (0,0) -- (1,-1) -- (2,-1) -- (3,-1) -- (4, -1);
\draw[blue] (4,-1) -- (4,-2) -- (4,-3) -- (5,-4);
\draw[red,dashed] (2,0) -- (3,0) -- (4,0);
\draw[blue] (5,0) -- (5,-1) -- (5,-2);
\draw[red,dashed] (4,0) -- (5,0);
\draw[red,dashed] (4,-1) -- (5,-1);
\draw[blue] (3,0) -- (3,-1);
\draw[blue] (4,0) -- (4,-1) -- (4,0);

\draw[blue] (0,0) -- (0,1);
\draw[blue] (2,-1) -- (2,0);
\draw[red,dashed] (0,1) -- (1,0) -- (2,0);
\draw[blue] (1,0) -- (1,-1);
\draw[red,dashed] (5,-4) -- (6,-4);
\draw[red,dashed] (4,-2) -- (5,-2);
\draw[blue] (6,-4) -- (6,-3) -- (5,-2);

\draw[black,dotted] (0,1) -- (1,1) -- (2,0);
\fill[black] (1,1) circle (.04cm);

\draw[black,dotted] (5,-2) -- (5,-3) -- (6,-4);
\draw[black,dotted] (5,-3) -- (4,-3);
\fill[black] (5,-3) circle (.04cm);

\fill[black] (0,0) circle (.07cm);
\fill[black] (1,-1) circle (.07cm);
\fill[black] (2,-1) circle (.07cm);
\fill[black] (3,-1) circle (.07cm);
\fill[black] (4,-1) circle (.07cm);
\fill[black] (4,-2) circle (.07cm);
\fill[black] (4,-3) circle (.07cm);
\fill[black] (5,-4) circle (.07cm);

\fill[black] (0,1) circle (.07cm);
\fill[black] (1,0) circle (.07cm);
\fill[black] (2,0) circle (.07cm);
\fill[black] (3,0) circle (.07cm);
\fill[black] (4,0) circle (.07cm);

\fill[black] (5,-2) circle (.07cm);
\fill[black] (6,-3) circle (.07cm);
\fill[black] (6,-4) circle (.07cm);
\fill[black] (5,-1) circle (.07cm);
\fill[black] (5,0) circle (.07cm);
\draw [decorate,decoration={brace,amplitude=10pt},yshift=3pt]
(2,0) -- (5,0) node [black,midway,yshift=0.55cm] 
{$n$ segments};
\draw [decorate,decoration={brace,amplitude=10pt,mirror},xshift=3pt]
(5,-2) -- (5,0) node [black,midway,xshift=1.3cm] 
{$m$ segments};
\end{scope}
\end{tikzpicture}
\end{center}
\caption{A $(n,m)$-arm shown with one of four ways to tile above an $(n,m)$ arm while avoiding monochromatic hexagons and parallelograms. The dotted black edges show edges present in the other three possible subdivisions. On each end of the figure, one may choose to place either a hexagon or two parallelograms above the outermost edges.
}
\label{Armsarmsarms}
\end{figure}
\subsubsection{The $(0,0)$-arm}
We see that a $(0,0)$-arm cannot appear in a plausibly coloured subdivision except on the boundary of $\Delta_n$, which is seen as follows. If it is not on the boundary, then there must be faces above the arm. Up to exchanging colours and reflectional symmetry, the only ways to place faces above a $(0,0)$-arm while avoiding monochromatic hexagons and parallelograms are shown in Figure \ref{badshapes}. Either way, there will necessarily be a $(1,0)$ or a $(1,1)$-arm, which are forbidden.

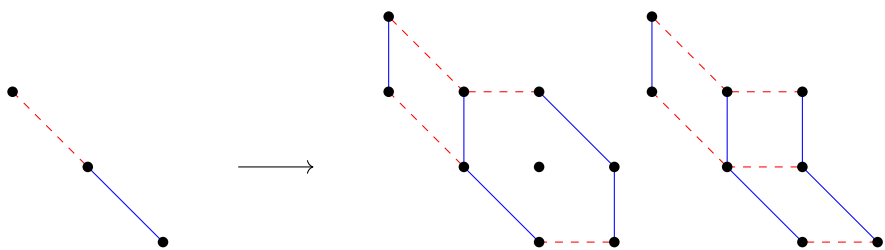
\begin{figure}
\begin{tikzpicture}
\draw[red, dashed] (0,0) -- (1,-1);
\draw[blue] (1,-1) -- (2,-2);

\fill[black] (0,0) circle (.07cm);
\fill[black] (1,-1) circle (.07cm);
\fill[black] (2,-2) circle (.07cm);

\draw[->] (3,-1) -- (4,-1);

\begin{scope}[shift={(5,0)}]
\draw[red, dashed] (0,0) -- (1,-1);
\draw[blue] (1,-1) -- (2,-2);
\draw[red, dashed] (2,-2) -- (3,-2);
\draw[blue] (3,-2) -- (3,-1);
\draw[blue] (3,-1) -- (2,0);
\draw[red, dashed] (2,0) -- (1,0);
\draw[blue] (1,0) -- (1,-1); 
\draw[red, dashed] (1,0) -- (0,1);
\draw[blue] (0,1) -- (0,0);

\fill[black] (0,0) circle (.07cm);
\fill[black] (1,-1) circle (.07cm);
\fill[black] (2,-2) circle (.07cm);
\fill[black] (3,-2) circle (.07cm);
\fill[black] (2,-1) circle (.07cm);
\fill[black] (3,-1) circle (.07cm);
\fill[black] (2,0) circle (.07cm);
\fill[black] (1,0) circle (.07cm);
\fill[black] (0,1) circle (.07cm);
\end{scope}
\begin{scope}[shift={(8.5,0)}]
\draw[red, dashed] (0,0) -- (1,-1);
\draw[blue] (1,-1) -- (2,-2);
\draw[blue] (1,-1) -- (1,0);
\draw[red, dashed] (1,0) -- (0,1);
\draw[blue] (0,1) -- (0,0);
\draw[red, dashed] (1,-1) -- (2,-1);
\draw[blue] (2,-1) -- (3,-2);
\draw[red, dashed] (3,-2) -- (2,-2);
\draw[blue] (2,-1) -- (2,0);
\draw[red, dashed] (2,0) -- (1,0);

\fill[black] (0,0) circle (.07cm);
\fill[black] (1,-1) circle (.07cm);
\fill[black] (2,-2) circle (.07cm);
\fill[black] (2,-1) circle (.07cm);
\fill[black] (1,0) circle (.07cm);
\fill[black] (0,1) circle (.07cm);
\fill[black] (3,-2) circle (.07cm);
\fill[black] (2,0) circle (.07cm);
\end{scope}
\end{tikzpicture}
\caption{A $(0,0)$-arm and the two options for placing non-monochromatic hexagons and parallelograms above it.}
\label{badshapes}
\end{figure}
\subsection{Subdividing from the corner}
To prove Theorem \ref{thm: tropical-MR}, we show that any subdivision of $\Delta_n$ for $n\geq 4$ without monochromatic hexagons or parallelograms must contain an arm. We examine the behavior of such a subdivision around its bottom left corner. Up to reflection, the three cases depicted in Figure \ref{Motzkincases} represent all of the possible shpes for the corner of a plausibly coloured  subdivision.

\begin{figure}
\begin{center}
\begin{tabular}{c c c}
\begin{tikzpicture}
\draw[black] (0,0) -- (1,0);
\draw[black] (1,0) -- (1,1);
\draw[black] (1,1) -- (0,1);
\draw[black] (0,1) -- (0,0);
\draw[black,->] (0,1) -- (0,3);
\draw[black, ->] (1,0) -- (3,0);
\end{tikzpicture}
&
\begin{tikzpicture}
\draw[black] (0,0) -- (1,0);
\draw[black] (1,0) -- (0,1);
\draw[black] (0,1) -- (0,0);
\draw[black] (2,0) -- (2,1) -- (1,2) -- (0,2);

\draw[black,->] (0,1) -- (0,3);
\draw[black,->] (1,0) -- (3,0);
\end{tikzpicture} 
&
\begin{tikzpicture}
\draw[black] (0,0) -- (1,0);
\draw[black] (1,0) -- (0,1);
\draw[black] (0,1) -- (0,0);
\draw[black] (1,0) -- (1,1) -- (0,2);

\draw[black,->] (0,1) -- (0,3);
\draw[black,->] (1,0) -- (3,0);
\end{tikzpicture}
\end{tabular}
\end{center}
\caption{An enumeration of the distinct cases arising in the lower left corner while constructing a plausibly coloured subdivision.}
\label{Motzkincases}
\end{figure}
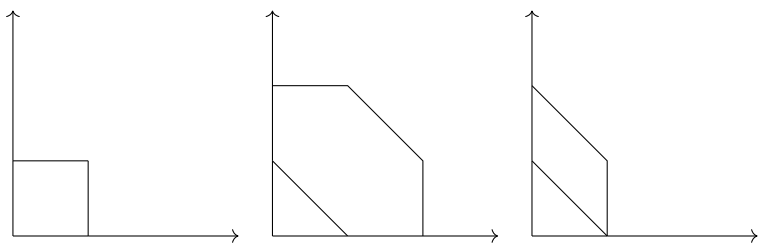
\subsubsection{The case of a square in the corner}
Suppose there is a square in the bottom left corner. This ensures that the left and bottom boundaries begin with opposite colours. Without loss of generality, assume that the left edge begins with a blue edge and the bottom edge begins with a red edge. Suppose that there are exactly $b$ consecutive blue edges on the left and $r$ consecutive red edges on the right. To avoid monochromatic hexagons and parallelograms, the colouring rules force a $r\times b$ grid of squares with triangles on both the left and bottom boundaries, as shown in Figure \ref{squarebad}. This forces an $(r-1,b-1)$-arm; thus this case can only yield a subdivision free of monochromatic hexagons and parallelograms if $r=b=1$ and $n=2$.

\subsubsection{The case of a hexagon on top of a triangle in the corner}
Suppose there is a hexagon on top of a triangle in the corner. Without loss of generality, assume that the diagonal edges of the hexagon are red and the vertical edges are blue; the colour of the horizontal edges is inconsequential. Let $r\geq 0$ be the number of consecutive red edges on the bottom boundary following the hexagon's horizontal edges. Each of these edges must be the base of a square and this chain of squares must followed by a blue triangle, as in Figure \ref{hexatoast}. Thus any subdivision starting with a triangle and hexagon must include an $(r,0)$-arm. If such a subdivision does not contain monochromatic hexagons or parallelograms, we must have $r=0$ and $n=3$.

\begin{figure}
\begin{minipage}{0.43\textwidth}
\centering
\begin{tikzpicture}
\sgrid{3}{2}
\draw[red, dashed] (0,0) -- (3,0);
\draw[red, dashed] (0,1) -- (3,1);
\draw[red, dashed] (0,2) -- (3,2);

\draw[red, dashed] (0,2) -- (0,3);
\draw[red, dashed] (0,3) -- (1,2);

\draw[blue] (0,0) -- (0,2);
\draw[blue] (1,0) -- (1,2);
\draw[blue] (2,0) -- (2,2);
\draw[blue] (3,0) -- (3,2);

\draw[blue] (3,0) -- (4,0);
\draw[blue] (4,0) -- (3,1);

\fill[black] (4,0) circle (.07cm);
\fill[black] (0,3) circle (.07cm);

\draw [decorate,decoration={brace,amplitude=10pt},xshift = -.1cm] (0,0) -- (0,2) node [black,midway,xshift = -1.2cm] {$b$ segments};
\draw [decorate,decoration={brace,amplitude=10pt,mirror},yshift = -.1cm] (0,0) -- (3,0) node [black,midway,yshift = -.5cm] 
{$r$ segments};
\end{tikzpicture}
\caption{The case of a square in the bottom left corner and the $(r-1,b-1)$ arm that must appear.}
\label{squarebad}
\end{minipage}
\hfill
\begin{minipage}{0.495\textwidth}
\centering
\begin{tikzpicture}
\draw[red, dashed] (0,0) -- (1,0);
\draw[red, dashed] (1,0) -- (0,1);
\draw[red, dashed] (0,1) -- (0,0);
\draw[black,dotted] (1,0) -- (2,0);
\draw[blue] (2,0) -- (2,1);
\draw[red, dashed] (2,1) -- (1,2);
\draw[black,dotted] (1,2) -- (0,2);
\draw[blue] (0,2) -- (0,1);

\draw[red, dashed] (2,1) -- (4,1);
\draw[red, dashed] (2,0) -- (4,0);
\draw[blue] (3,0) -- (3,1);
\draw[blue] (5,0) -- (4,0) -- (4,1);
\draw[blue] (4,1) -- (5,0);

\fill[black] (0,0) circle (.07cm);
\fill[black] (0,1) circle (.07cm);
\fill[black] (1,0) circle (.07cm);
\fill[black] (1,1) circle (.07cm);
\fill[black] (0,2) circle (.07cm);
\fill[black] (1,2) circle (.07cm);
\fill[black] (2,1) circle (.07cm);
\fill[black] (2,0) circle (.07cm);

\fill[black] (3,1) circle (.07cm);
\fill[black] (3,0) circle (.07cm);
\fill[black] (4,1) circle (.07cm);
\fill[black] (4,0) circle (.07cm);
\fill[black] (5,0) circle (.07cm);

\fill[white] (3,3) circle (.01cm);

\draw [decorate,decoration={brace,amplitude=10pt,mirror},yshift = -.1cm] (2,0) -- (4,0) node [black,midway,yshift = -.5cm] 
{$r$ segments};
\end{tikzpicture}
\caption{The case of a hexagon on top of a triangle in the bottom left corner and the $(r,0)$-arm that must appear.}
\label{hexatoast}
\end{minipage}
\end{figure}

\subsubsection{The case of a parallelogram on top of a triangle in the corner}
This case follows from the preceding one by contracting the horizontal edges of the hexagon. Thus the only subdivision that begins with this configuration and is free of monochromatic hexagons and parallelograms is a subdivision of $\Delta_2$.

\subsection{Counterexamples in the coaxial case.}
Theorem \ref{thm: tropical-MR} requires that no two points in the arrangement be coaxial. Without this restriction, we can find arrangements that do not determine monochromatic lines. One such arrangement is illustrated in Figure \ref{please don't be mad at my naming conventions}. This counterexample has the property that every unbounded edge in the line arrangement is the overlap of a red and a blue axis.

This counterexample can be used to construct further counterexamples. For any line arrangement $\mathcal L$, we may paste a copy of this counterexample into any empty region of $\mathcal L$. Doing so creates no new monochromatic points and eliminates any monochromatic points through which an unbounded edge of the counterexample passes. By adding enough copies of the counterexamples to any arrangement, one can obtain an arrangement that determines no monochromatic points.

\begin{figure}
\begin{center}
\begin{tikzpicture}
\draw[red, dashed] (-1,0) -- (-1,-1);
\draw[blue] (0,1) -- (0,0);
\draw[red, dashed] (1,1) -- (1,0);

\draw[red, dashed] (0,-1) -- (-1,-1);
\draw[blue] (0,0) -- (1,0);
\draw[red, dashed] (0,1) -- (1,1);

\draw[red, dashed] (-1,0) -- (0,1);
\draw[blue] (-1,-1) -- (0,0);
\draw[red, dashed] (0,-1) -- (1,0);

\draw[violet,->, dotted, thick] (0,0) -- (-2,0);
\draw[violet,->, dotted, thick] (0,1) -- (-1,1);
\draw[violet,->, dotted, thick] (-1,-1) -- (-2,-1);

\draw[violet,->, dotted, thick] (0,0) -- (2,2);
\draw[violet,->, dotted, thick] (0,1) -- (1,2);
\draw[violet,->, dotted, thick] (1,0) -- (2,1);

\draw[violet,->, dotted, thick] (0,0) -- (0,-2);
\draw[violet,->, dotted, thick] (-1,-1) -- (-1,-2);
\draw[violet,->, dotted, thick] (1,0) -- (1,-1);

\fill[red, dashed] (0,0) circle (2pt);
\fill[red, dashed] (-1,0) circle (2pt);
\fill[red, dashed] (0,-1) circle (2pt);
\fill[red, dashed] (1,1) circle (2pt);

\fill[blue] (0,1) circle (2pt);
\fill[blue] (1,0) circle (2pt);
\fill[blue] (-1,-1) circle (2pt);

\end{tikzpicture}
\end{center}
\caption{A counterexample to the tropical Motzkin-Rabin theorem in the coaxial case. The vertices of tropical lines are marked with circles and the rays of the tropical lines are drawn in the corresponding colour, with dotted purple axes representing where red and blue axes overlap.}
\label{please don't be mad at my naming conventions}
\end{figure}
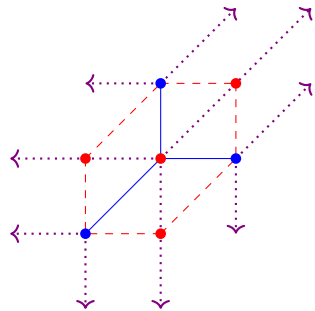
\section{Realizations of Newton subdivisions}
\label{universalitySection}
In this section, we will study realization spaces of Newton subdivisions. Informally, a realization space should be thought of as the space of tropical line arrangements with a given Newton subdivision.

Let $N$ be a linear Newton subdivision of a dilated standard triangle. Define the set of interior edges $E_N$ of a Newton subdivision $N$ to consist of the edges not contained in the boundary of $N$. These edges correspond to bounded segments in the corresponding tropical line arrangement. For any bounded segment in a tropical line arrangement, the displacement from one end of the segment to the other is $\pm \alpha e_i$ for some $i$; we call $\alpha$ the length of the segment. It is possible to reconstruct a line arrangement, up to translation, if we know its Newton subdivision $N$ and the length associated with each edge in $E_N$. 

Define the set of interior vertices $V_N$ to consist of the vertices not on the boundary of $N$. For any $v \in V_N$, define $E_v$ to be the set of edges containing $v$, and for any edge $e \in E_v$, define $\overrightarrow{e_v}$ to be the basis vector pointing along $e$ outwards from $v$.

\begin{definition}
An \textbf{exact metric} on a Newton subdivision $N$ is a map $d:E_N\rightarrow \mathbb R_{>0}$ such that for every vertex $v\in V_N$ we have \[\sum_{e\in E_v}d(e)\overrightarrow{e_v}=0\]
\end{definition}

The value of $d$ on an edge represents the corresponding length in the line arrangement. The exactness condition expresses the fact that the face vectors of any polytope must sum to zero. An example of an exact metric and a corresponding tropical line arrangement is shown in Figure \ref{exactnessness}. Figure \ref{milo claims to explain} illustrates the necessity of the exactness condition.

\begin{figure}
\begin{center}
\begin{tikzpicture}[scale=.75]
\grid{3}
\draw (0,0) -- (3,0) -- (0,3) -- cycle;

\draw[very thick] (0,1) -- (1,0) node[midway,yshift=-4pt,xshift=-2pt]{$1$};
\draw[very thick] (1,1) -- (1,0) node[midway,xshift=5pt]{$1$};
\draw[very thick] (1,1) -- (0,2) node[midway,yshift=-4pt,xshift=-2pt]{$1$};
\draw[very thick] (1,1) -- (2,1) node[midway,yshift=5pt,xshift=-2pt]{$1$};
\draw[very thick] (0,2) -- (1,2) node[midway,yshift=5pt,xshift=-2pt]{$2$};
\draw[very thick] (2,0) -- (2,1) node[midway,xshift=5pt,yshift=-2pt]{$3$};
\fill (1,1) circle (4pt);

\draw[->] (3,1.5)--(4,1.5);

\begin{scope}[shift={(6,1)},scale=.75]

\draw[very thick] (0,0) -- (1,0);
\draw[very thick] (1,0) -- (1,1);
\draw[very thick] (1,1) -- (0,0);
\draw[very thick] (0,0) -- (-1,-1);
\draw[very thick] (1,0) -- (4,0);
\draw[very thick] (1,1) -- (1,3);
\draw (1,0) -- (1,-2);
\draw (-1,-1) -- (-1,-2);
\draw (-1,-1) -- (-2,-1);
\draw (0,0) -- (-2,0);
\draw (4,0) -- (4,-2);
\draw (4,0) -- (5,1);
\draw (1,3) -- (-2,3);
\draw (1,3) -- (2,4);
\draw (1,1) -- (4,4);

\fill[gray,opacity=.25] (0,0) -- (1,0) -- (1,1) -- cycle;

\end{scope}

\end{tikzpicture}
\caption{An exact metric on a linear Newton subdivision and its corresponding tropical line arrangement. The edges of $E_N$ are bolded in $N$ and correspond to the bounded edges on the right. The interior vertex of $V_N$ is highlighted and corresponds to the shaded face of the arrangement.}
\label{exactnessness}
\end{center}
\end{figure}
\begin{figure}
\begin{tabular}{l r}

\begin{tikzpicture}[scale=.75]
\grid{3}
\draw (0,0)--(0,1)--(1,0) -- cycle;
\draw (0,1)--(0,2)--(1,1)--(1,0)--cycle;
\draw (1,0)--(1,1)--(2,1)--(2,0)--cycle;
\draw (2,0)--(2,1)--(3,0)--cycle;
\draw (0,2)--(1,2)--(2,1)--(1,1)--cycle;
\draw (0,2)--(0,3)--(1,2);
\draw[->] (3,2)--(4,2);
\node at (.5,1.2) {1};
\node at (1.2,.5) {1};
\node at (1.25, 1.3) {3};
\end{tikzpicture}
&
\begin{tikzpicture}
\draw[very thick, densely dotted, red] (-.35,-.65)--(0,-.65)--(0,.4)--(-.35,.05);
\tropline{0}{.7}{.5}{.3}{1};
\tropline{-1.2}{-.8}{.85}{.5}{.5};
\tropline{.5}{-.65}{1}{1}{.5};
\end{tikzpicture}
\end{tabular} \caption{An assignment of lengths not satisfying the exactness condition around the interior vertex. The failure of the exactness condition corresponds to the failure of the red edges on the right to form a polygon.} \label{milo claims to explain}
\end{figure}

We define the \textbf{realization space} $\mathcal R(N)$ of a linear Newton subdivision $N$ to be the space of exact metrics on $N$. The realization space can be identified with a polyhedral subset of $\mathbb R_{>0}^m$ on which certain integer linear equalities and inequalities hold. In this section, we work towards showing that every subspace defined by linear equalities and inequalities with integer coefficients is linearly isomorphic to $\mathcal R(N)$ for some subdivision $N$.

\subsection{Construction of universal subdivisions}
We now describe a way to construct a linear Newton subdivision whose realization space satisfies some particular set of equalities. Recalling that linear Newton subdivisions capture the combinatorial data of a line arrangement, our goal is to specify incidences that are satisfied exactly on a given subspace of tropical line arrangements.

The space of tropical line arrangements whose centers share a common vertical axis is identified with $\mathbb R_{>0}^m$. Choose coordinates on this space by defining the $i^{th}$ coordinate to be distance between the $i^{th}$ and $(i+1)^{th}$ horizontal line (see Figure \ref{encodingStep}). We will describe a construction, representable using a linear Newton subdivision, that enforces further constraints between the coordinates without introducing any degrees of freedom. As a pictorial outline of the proof, Figure \ref{nonnewtonianPicture} shows the line arrangement we use to force a single linear equality between the distances between lines. Figure \ref{complete} shows a series of these constraining constructions pasted together. Figure \ref{newtonianPicture} shows the Newton subdivision corresponding to a line arrangement forcing a single constraint. Lemma \ref{newtonConstruction} formalizes the way in which these constructions fit together and describes the process of adding a single constraint to a realization space.

Our construction may only ever force constraints of the form
\begin{equation}\sum_{a\leq j < b}v_j=\sum_{a'\leq j <b'}v_j,\label{constraint}\end{equation}
where the $v_j$ are the affine coordinates on the space of intersections of lines with the common vertical axis described above. 
Geometrically, this corresponds to forcing the distance between one pair of horizontal axes to equal the distance between some other pair of horizontal axes. The construction requires that the tuple $(a',b',a,b)$ satisfies $a<b$, $a'<b'$, $a'<a$ and $b'<b$. We call such tuples \textbf{admissible} and we say that a subspace $S$ of $\mathbb R_{>0}^m$ is \textbf{intervallic} if there is some finite set $I$ of admissible tuples such that $S$ is precisely the subspace on which Equation \ref{constraint} holds for all $(a',b',a,b)\in I$.

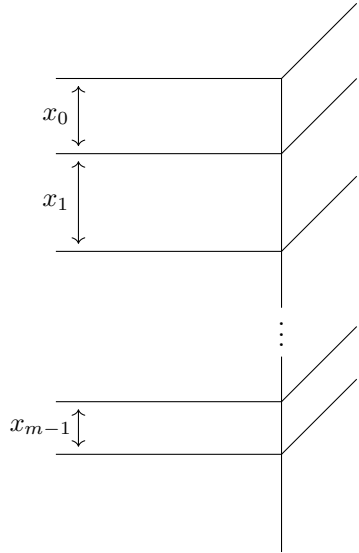
\begin{figure}
\begin{center}
\begin{tikzpicture}
\draw (-3,0) -- (0,0) -- (1,1);
\draw (-3,-1) -- (0,-1) -- (1,0);
\draw (-3,-2.3) -- (0,-2.3) -- (1,-1.3);

\node[black] at (0,-3.3){$\vdots$};
\draw (-3,-4.3) -- (0,-4.3) -- (1,-3.3);
\draw (-3,-5) -- (0,-5) -- (1,-4);
\draw (0,0) -- (0,-3.05);
\draw (0,-3.7) -- (0,-6.3);

\draw [<->] (-2.7,-0.1) -- (-2.7,-0.9) node [black,midway,xshift=-.3cm] {$x_0$};
\draw [<->] (-2.7,-1.1) -- (-2.7,-2.2) node [black,midway,xshift=-.3cm] {$x_1$};

\draw [<->] (-2.7,-4.4) -- (-2.7,-4.9) node [black,midway,xshift=-.5cm] {$x_{m-1}$};

\end{tikzpicture}
\end{center}
\caption{A Newton subdivision with realization space $\mathbb R^m$, with the natural coordinates marked.}
\label{encodingStep}
\end{figure}

The construction we use to add a single constraint to a realization space assumes that every unbounded horizontal edge of the arrangement intersects a common vertical edge of a tropical line. This is a rigidification that is necessary to ensure that adding more constraints will not interfere with previously constructed structure of the realization space.

Let us say that a Newton subdivision $N$ is \textbf{$m$-extensible} if the left edge of the subdivision consists of $m+2$ edges with vertices $v_{-1},\ldots,v_{m}$ such that each vertex $v_i$ for $0\leq i < m$ is connected to precisely one bounded edge $\ell_i$ in the direction $\bar e_3$. For any $m$-extensible Newton subdivision, define a map $\pi_N:\mathcal R(N)\rightarrow \mathbb R_{>0}^m$ to be the map taking a exact metric $d$ to the vector $(d(\ell_0),\ldots,d(\ell_{m-1}))$.

\begin{lemma}
For any $m$-extensible Newton subdivision $N$ and admissible tuple $(a',b',a,b)$, there exists an $m$-extensible Newton subdivision $N'$ and a injective linear map $\sigma:\mathcal R(N')\rightarrow \mathcal R(N)$ such that $\pi_{N'}=\pi_N\circ \sigma$ and whose image is precisely the subset of $d\in \mathcal R(N)$ such that
\[\sum_{a\leq i < b}\pi_N(d)_i=\sum_{a'\leq i < b'}\pi_N(d)_i.\]
\label{newtonConstruction}
\end{lemma}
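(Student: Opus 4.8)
The strategy is to explicitly construct $N'$ from $N$ by attaching a new gadget to the left edge of $N$ that encodes the single linear constraint via exactness conditions at newly introduced interior vertices. The construction is the one illustrated in Figures \ref{nonnewtonianPicture}, \ref{complete}, and \ref{newtonianPicture}: we build a chain of faces whose exactness relations cumulatively transport the sum $\sum_{a\leq i<b}\pi_N(d)_i$ along one diagonal path and the sum $\sum_{a'\leq i<b'}\pi_N(d)_i$ along another, and then force the two accumulated displacements to agree at a single vertex. Concretely, I would first describe how the $m$ bounded edges $\ell_0,\ldots,\ell_{m-1}$ in direction $\bar e_3$ emanating from the left boundary of $N$ are prolonged into the gadget, so that $N'$ is again $m$-extensible with the \emph{same} edges $\ell_i$ playing the role of the distinguished bounded edges; this is exactly what makes the relation $\pi_{N'}=\pi_N\circ\sigma$ hold, since $\sigma$ will be the map that restricts a metric on $N'$ to its sub-subdivision $N$, leaving the values $d(\ell_i)$ untouched.

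\textbf{The map $\sigma$ and linearity.} I would define $\sigma:\mathcal R(N')\to\mathcal R(N)$ as the restriction of an exact metric on $N'$ to the edges of $E_N\subseteq E_{N'}$. Since the exactness conditions are linear, $\mathcal R(N')$ and $\mathcal R(N)$ are polyhedral cones cut out by integer-linear equalities in the positive orthant, and restriction of coordinates is manifestly linear; injectivity follows once I show that the metric values on the \emph{new} edges of the gadget are uniquely determined by their values on $E_N$. This last point is the crux: at each interior vertex of the gadget, the admissibility conditions $a<b$, $a'<b'$, $a'<a$, $b'<b$ must guarantee that the exactness equations can be solved for the new edge lengths in terms of the old ones, with all solved values strictly positive so that they lie in $\mathbb R_{>0}$. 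The role of admissibility is precisely to arrange the two diagonal accumulation paths so that they interleave correctly — the condition $a'<a$ and $b'<b$ ensures the two intervals $[a',b')$ and $[a,b)$ are staggered rather than nested, which is what lets the gadget route one sum above the other without the faces overlapping.

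\textbf{Identifying the image.} Finally, I would verify that a point $d\in\mathcal R(N)$ lies in the image of $\sigma$ if and only if $\sum_{a\leq i<b}\pi_N(d)_i=\sum_{a'\leq i<b'}\pi_N(d)_i$. The forward direction comes from reading off the exactness condition at the terminal vertex of the gadget, where the two accumulated displacements are forced to cancel; the reverse direction amounts to checking that whenever the equality holds, the system of exactness equations on the gadget edges admits a (necessarily unique, by injectivity) strictly positive solution extending $d$. I would carry this out by tracing the gadget vertex by vertex, writing each new edge length as a partial sum $\sum_{i<k}\pi_N(d)_i$ or a difference of such partial sums and checking positivity using the staggering from admissibility.

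\textbf{Main obstacle.} The technical heart — and the step most likely to require care — is the positivity and well-definedness of the extension: one must confirm that the explicitly constructed gadget is genuinely a linear Newton subdivision (every face is of the allowed form $P_{c,w_1,w_2,w_3}$, no overlaps, edge directions consistent) and that the solved edge lengths are strictly positive on the \emph{entire} constraint subspace, not merely generically. Getting the admissibility inequalities to do exactly this work, so that the gadget closes up into a valid subdivision while preserving $m$-extensibility for the inductive application in Theorem \ref{thm: universality}, is where the real content lies; the linearity and the $\pi_{N'}=\pi_N\circ\sigma$ bookkeeping are then routine.
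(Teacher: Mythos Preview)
Your outline matches the paper's approach closely: attach an explicit gadget to the left side of $N$, define $\sigma$ as restriction, prove injectivity by showing the new edge lengths are determined by the old ones via exactness, and identify the image by tracking where the system on the gadget becomes overdetermined.

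One small correction: the distinguished edges of $N'$ are \emph{not} the same $\ell_i$. In the construction the old left boundary of $N$ becomes interior to $N'$, and the distinguished edges of $N'$ sit on its new left boundary (what the paper calls ``bar 4''). The identity $\pi_{N'}=\pi_N\circ\sigma$ therefore does not hold simply because $\sigma$ ``leaves the values $d(\ell_i)$ untouched''; rather, one must argue that the metric value on each new distinguished edge equals the value on the corresponding old $\ell_i$. The paper does this by observing that every new interior vertex has exactly one incoming and one outgoing edge in the $\{\bar e_1,\bar e_3\}$ directions, so projecting the exactness condition onto the line killing $\bar e_2$ forces the metric to be constant along each horizontal chain from bar 4 back to bar 0. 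You should build this propagation step into your argument explicitly, since it is also what underlies injectivity (the non-vertical new edge values are determined first, and only then are the vertical ones recovered bar by bar). The constraint itself emerges not at a single ``terminal vertex'' but from the fact that one of the interior bars (bar 2) has its top and bottom values both forced, making the exactness system along it overdetermined by exactly the desired linear equation; positivity on that bar follows from the ordering $a'<a$, $b'<b$, which makes the relevant coefficient sequence monotone.
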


\begin{proof}
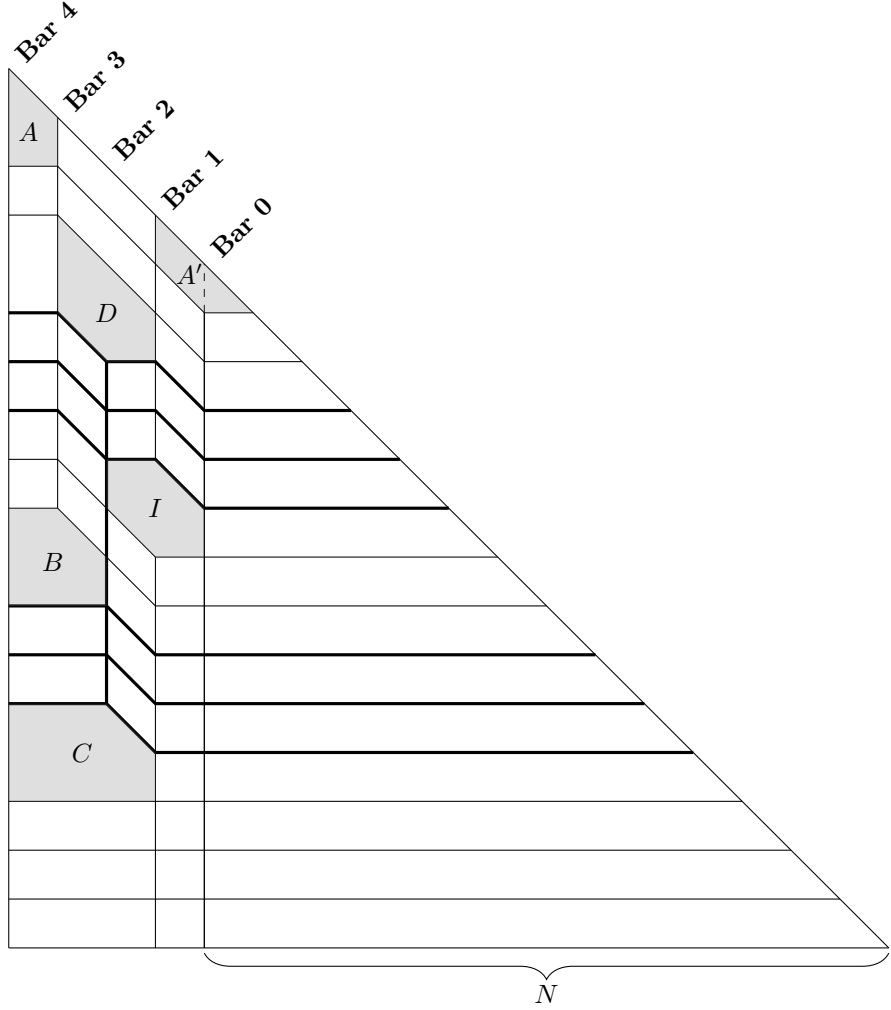
\begin{figure}
\begin{tikzpicture}[scale=.65]
\draw (0,0) -- (0,18);
\draw (3,0) -- (3,8);
\draw (4,0) -- (4,13);
\draw[very thick] (2,5) -- (2,12);
\draw (1,9) -- (1,17);
\draw (3,10) -- (3,15);
\draw (4,0) -- (4,13);
\draw[dashed] (4,13) -- (4,14);

\draw (0,0) -- (18,0);
\draw (0,1) -- (17,1);
\draw (0,2) -- (16,2);
\draw (0,3) -- (15,3);
\draw[very thick] (0,5) -- (2,5) -- (3,4) -- (14,4);
\draw[very thick] (0,6) -- (2,6) -- (3,5) -- (13,5);
\draw[very thick] (0,7) -- (2,7) -- (3,6) -- (12,6);
\draw (0,9) -- (1,9) -- (3,7) -- (11,7);
\draw (0,10) -- (1,10) -- (3,8) -- (10,8);
\draw[very thick] (0,11) -- (1,11) -- (2,10) -- (3,10) -- (4,9) -- (9,9);
\draw[very thick] (0,12) -- (1,12) -- (2,11) -- (3,11) -- (4,10) -- (8,10);
\draw[very thick] (0,13) -- (1,13) -- (2,12) -- (3,12) -- (4,11) -- (7,11);
\draw (0,15) -- (1,15) -- (4,12) -- (6,12);
\draw (0,16) -- (1,16) -- (4,13) -- (5,13);

\draw (0,18) -- (18,0);

\draw [decorate,decoration={brace,amplitude=10pt},yshift=-3pt]
(18,0) -- (4,0) node [black,midway,yshift=-0.55cm] 
{$N$};
\fill[gray,opacity=.25] (0,3) -- (0,5) -- (2,5) -- (3,4) -- (3,3) -- cycle;
\fill[gray,opacity=.25] (0,7) -- (0,9) -- (1,9) -- (2,8) -- (2,7) -- cycle;
\fill[gray,opacity=.25] (0,16) -- (0,18) -- (1,17) -- (1,16) -- cycle;
\fill[gray,opacity=.25] (2,9) -- (2,10) -- (3,10) -- (4,9) -- (4,8) -- (3,8) -- cycle;
\fill[gray,opacity=.25] (3,14) -- (3,15) -- (5,13) -- (4,13) -- cycle;
\fill[gray,opacity=.25] (3,12) -- (2,12) -- (1,13) -- (1,15) -- (3,13) -- cycle;

\node[black] at (1.5,4){$C$};
\node[black] at (.9,7.9){$B$};
\node[black] at (.4,16.7){$A$};
\node[black] at (3,9){$I$};
\node[black] at (3.7,13.8){$A'$};
\node[black] at (2,13){$D$};

\node[black,rotate=45,xshift=.7cm] at (4,14){\bf Bar 0};
\node[black,rotate=45,xshift=.7cm] at (3,15){\bf Bar 1};
\node[black,rotate=45,xshift=.7cm] at (2,16){\bf Bar 2};
\node[black,rotate=45,xshift=.7cm] at (1,17){\bf Bar 3};
\node[black,rotate=45,xshift=.7cm] at (0,18){\bf Bar 4};

\end{tikzpicture}
\caption{An example of a construction from Lemma \ref{newtonConstruction} with the tuple $(a',b',a,b)=(2,5,7,10)$. The horizontal edges representing the two intervals between which equality is forced are shown in bold as is a vertical line representing bar 2.}
\label{newtonianPicture}
\end{figure}
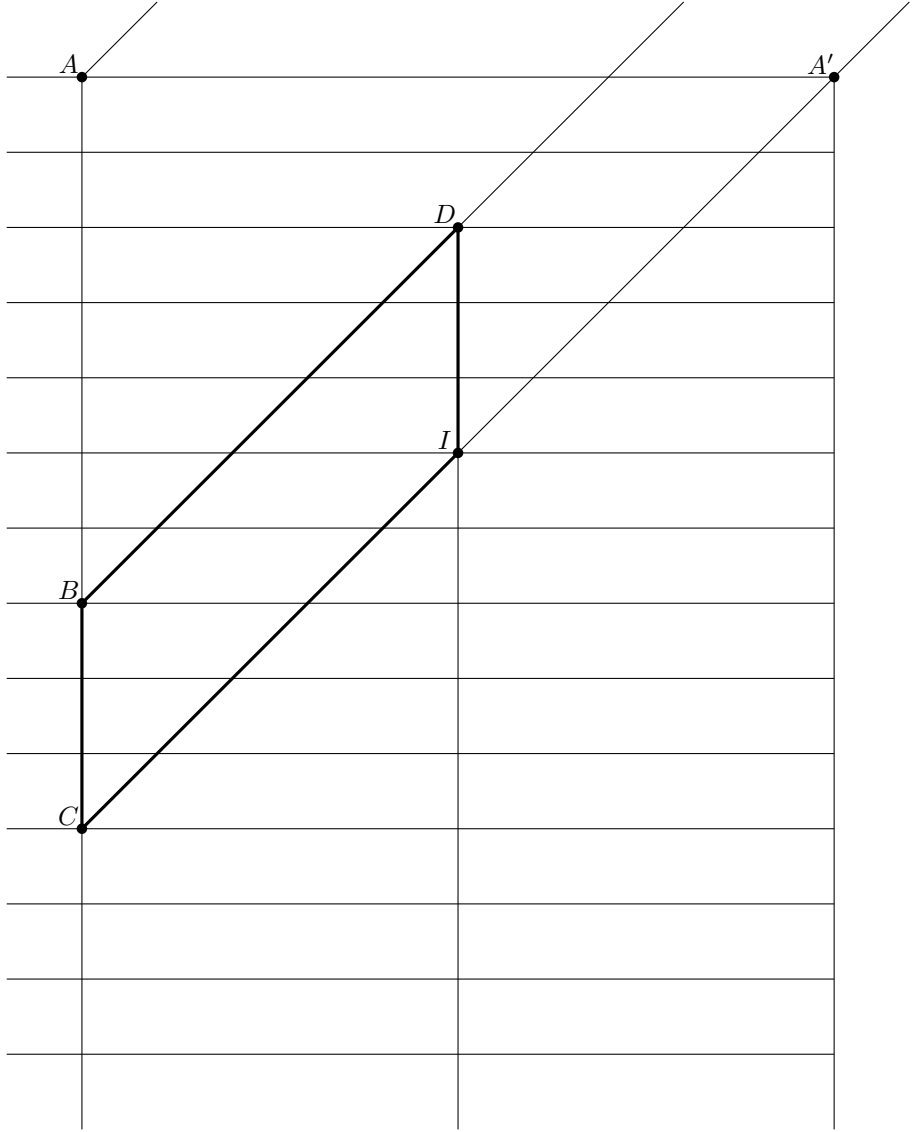
\begin{figure}
\begin{tikzpicture}
\draw (10,13) -- (10,-1);
\draw (-1,0) -- (10,0);
\draw (-1,1) -- (10,1);
\draw (-1,2) -- (10,2);
\draw (-1,3) -- (10,3);
\draw (-1,4) -- (10,4);
\draw (-1,5) -- (10,5);
\draw (-1,6) -- (10,6);
\draw (-1,7) -- (10,7);
\draw (-1,8) -- (10,8);
\draw (-1,9) -- (10,9);
\draw (-1,10) -- (10,10);
\draw (-1,11) -- (10,11);
\draw (-1,12) -- (10,12);
\draw (-1,13) -- (10,13);

\draw (0,13) -- (0,-1);
\draw (0,13) -- (1,14);
\draw (0,3) -- (11,14);
\draw (0,6) -- (8,14);
\draw (5,11) -- (5,-1);

\draw[very thick] (0,3) -- (0,6) -- (5,11) -- (5,8) -- cycle;

\fill[black] (0,3) circle (.7mm) node[black,xshift=-5pt,yshift=5pt]{$C$};
\fill[black] (0,6) circle (.7mm) node[black,xshift=-5pt,yshift=5pt]{$B$};
\fill[black] (0,13) circle (.7mm) node[black,xshift=-5pt,yshift=5pt]{$A$};
\fill[black] (5,8) circle (.7mm) node[black,xshift=-5pt,yshift=5pt]{$I$};
\fill[black] (5,11) circle (.7mm) node[black,xshift=-5pt,yshift=5pt]{$D$};
\fill[black] (10,13) circle (.7mm) node[black,xshift=-5pt,yshift=5pt]{$A'$};

\end{tikzpicture}
\caption{The arrangement represented by the leftmost four columns of $N'$. Everything to the right of the rightmost line is determined by $N$. The parallelogram in bold is the main focus of the construction, as the length of opposite sides must be equal.}
\label{nonnewtonianPicture}
\end{figure}

\textbf{Construction of $N'$ from $N$.} Let $v_{-1},\ldots,v_m$ be the vertices of $N$ on the left edge. We will place additional faces along this left edge to create a larger subdivision $N'$. We now divide our construction into a number of vertical components which we will call \textbf{bars}, which correspond to contiguous vertical segments in $N'\setminus N$. We do not use these as formal objects, but rather as guides for the reader. Bar 0 will correspond to the left edge of $N$. We begin by defining a set of vertices we will use to extend $N$.
\begin{description}
\item[Bar 0 to 1a] If $-1\leq i < b'$, define $v'_{1,i}=v_i+\bar e_1$.
\item[Bar 0 to 1b] If $b'\leq i \leq m$, define $v'_{1,i}=v_i-\bar e_3$.
\item[Bar 1a to 3] If $-1\leq i < a'$, define $v'_{3,i}=v'_{1,i}+2\bar e_1$.
\item[Bar 1a to 2] If $a'\leq i < b'$, define $v'_{2,i}=v'_{1,i}-\bar e_3$.
\item[Bar 1b to 2] If $b'\leq i < b$, define $v'_{2,i}=v'_{1,i}+\bar e_1$.
\item[Bar 1b to 4] If $b\leq i \leq m$, define $v'_{4,i}=v'_{1,i}-3\bar e_3$.
\item[Bar 2 to 3] If $a'\leq i < a$, define $v'_{3,i}=v'_{2,i}+\bar e_1$.
\item[Bar 2 to 4] If $a\leq i <b$, define $v'_{4,i}=v'_{2,i}-2\bar e_3$.
\item[Bar 3 to 4] If $i=-1$, define $v'_{4,i}=v'_{3,i}+\bar e_1$. If $0\leq i <a$, define $v'_{4,i}=v'_{3,i}-\bar e_3$.
\end{description}
We will define the vertices of $N'$ to be the vertices of $N$ along with all the additional $v'_{k,i}$ except for $v_{-1}$.

Using these vertices, we begin to specify edges of $N'$. In particular, we define the edges of $N'$ to be the edges in the following seven cases:
\begin{description}
\item[Edges from $N$]: Let $N'$ contain every edge of $N$ that does not contain $v_{-1}$. Additionally, let $v'_{-1,-1}$ be the vertex in $N$ that shares an edge in the direction of $\bar e_1$ with $v_{-1}$. Let $N'$ contain an edge connecting $v_{-1}$ to $v'_{1,-1}$.
\item[Non-vertical edges]: Whenever, in the above nine cases defining vertices, the value of $v'_{k',i}$ was defined from $v'_{k,i}$, there is an edge connecting $v'_{k,i}$ and $v'_{k',i}$. 
\item[Bar 0] There are edges $v_i$ to $v_{i+1}$ for $0\leq i < m$.
\item[Bar 1a] There are edges $v'_{1,i-1}$ to $v'_{1,i}$ for $0\leq i < b'$.
\item[Bar 1b] There are edges $v'_{1,i}$ to $v'_{1,i+1}$ for $b'\leq i < m$.
\item[Bar 2] There are edges $v_{2,i}$ to $v_{2,i+1}$ for $a'\leq i < b'-1$.
\item[Bar 3] There are edges $v_{3,i-1}$ to $v_{3,i}$ for $0\leq i < a$.
\item[Bar 4] There are edges $v_{4,i-1}$ to $v_{4,i}$ for $0\leq i < m$.
\end{description}
By computation, one may verify that the specified edges are all vertical and oriented in the proper direction.

We claim that the defined set of edges is the set of edges of some linear Newton subdivision. We do this by demanding that the faces of $N'$ be precisely the regions bounded by the given edges and checking that this defines a linear Newton subdivision. Most of the regions defined as such will be parallelograms as the construction gives many parallel segments between bars. The only faces of $N'$ outside of $N$ that are not parallelograms are at the ends of bars. 

Every face determined by the edges must be convex because positive combinations of the edges leaving a fixed interior vertex always span the plane, hence determine only angles less than $\pi$. As the edges are always parallel to $\bar e_1,\,\bar e_2$ or $\bar e_3$, we may test that the faces are of the form $P_{c,w_1,w_2,w_3}$ by observing that, when their boundary is traversed counterclockwise, the edge in the direction $\bar e_1$ is longer than the edge in the direction $-\bar e_1$. We may easily enumerate the polygons in $N'$ that are not parallelograms and check that they are of the desired form:
\begin{itemize}
\item There is some polygon $A'$ containing the vertices $v_0,\,v'_{1,0}$ and $v'_{1,-1}$ along with some vertices of $N$. Letting $f\in N$ be the face containing $v_{-1}$, we find that $A$ is the Minkowski sum of $f$ with a segment in the direction of $\bar e_1$, hence is of the form $P_{c,w_1,w_2,w_3}$.
\item There is a polygon $A$ with vertices $\{v'_{4,-1},v'_{3,-1},v'_{3,0},v'_{4,0}\}$.
\item There is a polygon $B$ with vertices $\{v'_{4,a-1},v'_{4,a},v'_{2,a},v'_{2,a-1},v'_{3,a-1}\}$
\item There is a polygon $C$ with vertices $\{v'_{4,b-1},v'_{4,b},v'_{1,b},v'_{1,b-1},v'_{2,b-1}\}$
\item There is a polygon $D$ with vertices $\{v'_{3,a'-1},v'_{3,a'},v'_{2,a'},v'_{1,a'},v'_{1,a'-1}\}$
\item There is a polygon $I$ with vertices $\{v'_{2,b'-1},v'_{2,b'},v'_{1,b'},v'_{0,b'},v'_{0,b'-1},v'_{1,b'-1}\}$
\end{itemize}

After verifying each of these special cases, we have that $N'$ is a linear Newton subdivision. This subdivision is $m$-extensible as every edge aside from the topmost one enters bar 4 horizontally and there are $m+2$ such edges. The bounded edges and vertices of $N$ are a subset of those of $N'$. Therefore, we can restrict an element $d\in \mathcal R(N')$ to the bounded edges of $N$ to give an element of $\mathcal R(N)$. Call this map $\sigma:\mathcal R(N')\rightarrow \mathcal R(N)$.

\textbf{Compatibility of $\sigma$ with $\pi_N$ and $\pi_{N'}$.} The construction of $N'$ has the property that for any vertex $v\in V_{N'}\setminus V_N$, there is exactly one edge $\ell_v$ containing $v$ such that $\overrightarrow{e_v}\in \{\bar e_1,-\bar e_3\}$ and exactly one $r_v$ such that $\overrightarrow{e_v}\in \{-\bar e_1,\bar e_3\}$. The possible arrangements of edges around each vertex, labeled with the possible values of $d$, are shown in Figure \ref{i'm a figure}. By the projection $\mathbb R^2\rightarrow \mathbb R^1$ killing $\bar e_2$, we see that any $d\in \mathcal R(N')$ must have $d(\ell_v)=d(r_v)$. This is because the construction creates various horizontal chains along which any $d\in\mathcal R(N')$ is constant. We then see that, if we have $v=v'_{i,k}$ for some $i,k$, we must have that $d(r_v)=d(\ell_v)=d(v_{i})=\pi_N(\sigma(d))_i$. In particular, we find that $\pi_{N'}=\pi_N\circ \sigma$.
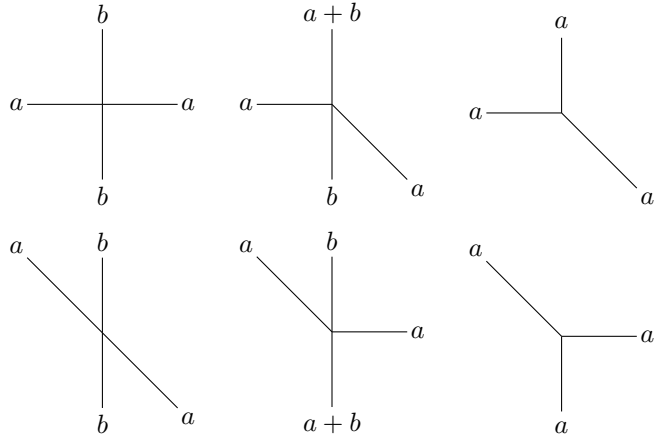
\begin{figure}
\begin{center}
\begin{tabular}{ccc}
\begin{tikzpicture}
\draw (0,0) -- (1,0) node[xshift=4pt]{$a$};
\draw (0,0) -- (-1,0) node[xshift=-4pt]{$a$};
\draw (0,0) -- (0,1) node[yshift=6pt]{$b$};
\draw (0,0) -- (0,-1) node[yshift=-6pt]{$b$};
\end{tikzpicture}
&
\begin{tikzpicture}
\draw (0,0) -- (1,-1) node[xshift=4pt,yshift=-4pt]{$a$};
\draw (0,0) -- (-1,0) node[xshift=-4pt]{$a$};
\draw (0,0) -- (0,1) node[yshift=6pt]{$a+b$};
\draw (0,0) -- (0,-1) node[yshift=-6pt]{$b$};
\end{tikzpicture}
&
\begin{tikzpicture}
\draw (0,0) -- (1,-1) node[xshift=4pt,yshift=-4pt]{$a$};
\draw (0,0) -- (-1,0) node[xshift=-4pt]{$a$};
\draw (0,0) -- (0,1) node[yshift=6pt]{$a$};
\end{tikzpicture}
\\
\begin{tikzpicture}
\draw (0,0) -- (1,-1) node[xshift=4pt,yshift=-4pt]{$a$};
\draw (0,0) -- (-1,1) node[xshift=-4pt,yshift=4pt]{$a$};
\draw (0,0) -- (0,1) node[yshift=6pt]{$b$};
\draw (0,0) -- (0,-1) node[yshift=-6pt]{$b$};
\end{tikzpicture}
&
\begin{tikzpicture}
\draw (0,0) -- (1,0) node[xshift=4pt]{$a$};
\draw (0,0) -- (-1,1) node[xshift=-4pt,yshift=4pt]{$a$};
\draw (0,0) -- (0,1) node[yshift=6pt]{$b$};
\draw (0,0) -- (0,-1) node[yshift=-6pt]{$a+b$};
\end{tikzpicture}
&
\begin{tikzpicture}
\draw (0,0) -- (1,0) node[xshift=4pt]{$a$};
\draw (0,0) -- (-1,1) node[xshift=-4pt,yshift=4pt]{$a$};
\draw (0,0) -- (0,-1) node[yshift=-6pt]{$a$};
\end{tikzpicture}

\end{tabular}
\end{center}
\caption{The six arrangements of edges appearing in the added portion of $N'$ labeled with a generic exact metric.}
\label{i'm a figure}
\end{figure}

\textbf{Injectivity of $\sigma$.} We have shown that the values of $d\in \mathcal R(N')$ on the non-vertical edges is determined by the value $\sigma(d)$. To conclude that $\sigma$ is injective, we must show that the values of $d$ on the vertical edges are also determined by $\sigma(d)$. We use the exactness condition to accomplish this.

Define a constant $\alpha_v$ which is $1$ if $\ell_v$ is in the direction $\bar e_1$ and $0$ otherwise and a constant $\beta_v$ which is $1$ if $r_v$ is in the direction $\bar e_1$ and $0$ otherwise. Let $\kappa_v=\alpha_v-\beta_v$. Note that $\kappa_v\in \{1,0,-1\}$ for all $v\in V_{N'}\setminus V_N$.

A vertex $v$ may have at most two vertical edges. We will call the upwards and downwards edges $u_v$ and $d_v$ respectively when they exist. Formally, we will write $d(u_v)=0$ or $d(d_v)=0$ if there is no upwards or downwards edge respectively. With this formalism, we find that $d(d_v)=d(u_v)+\kappa_v d(\ell_v)$, by the exactness condition. Letting $v=v'_{i,k}$, we get that $d(d_v)=d(u_v)+\kappa_v \pi_N(\sigma(d))_i$. 
Given $\sigma(d)$, this equation allows us to determine $d(d_v)$ from $d(u_v)$ and vice versa. In the special case where a vertex has only one vertical edge and either $d(d_v)$ or $d(u_v)$ is taken to be zero, the equation yields the value of $d$ on the existing vertical edge. The vertical segments bar 0, 1a, 1b, 2, and 3 all contain some vertex with only one vertical line and are contiguous, hence the values of $d$ on the vertical edges are determined by $\sigma(d)$. Thus, $\sigma$ is injective.

\textbf{The image of $\sigma$.} Fix some $d'\in\mathcal R(N)$. Let us derive a condition equivalent to the existence of some $d\in\mathcal R(N')$ such that $\sigma(d)=d'$. 

Bar 0 and bar 1b have $\kappa_v\in \{1,0\}$ everywhere with a single degree-three vertex at the top. Thus the values of any such $d'$ on this bar can be written as sums of positive quantities and are themselves positive. Bar 1a and bar 3 have that $\kappa_v\in \{0,-1\}$ everywhere with a single vertex at the bottom with degree $3$. The values of $d'$ can again be written as sums of positive quantities by working bottom to top. So these bars never pose any obstacle to extending $d'$ to $N'$.

The only remaining question is when the values of $d$ can be chosen suitably on bar 2. Because the value of $d$ on the non-vertical edges is determined by $d'=\sigma(d)$, the exactness condition yields a system of equations giving the value of $d$ on any vertical edge in terms of the value of $d$ on the edge above. The exactness condition also determines the value of $d$ on the highest and lowest edges. This system is overdetermined and the existence of a solution $d$ reduces to the following equation on $d'$.

$$\sum_{a'\leq i < b}\kappa_{v_{2,i}}.\pi_{N}(d')_i=0$$

Expanding $\kappa_v=\alpha_v-\beta_v$ and noting that for $a'\leq i <b$ we have that $\alpha_{v_{2,i}}=1$ if and only if $a'\leq i<a$ and $\beta_{v_{2,i}}=1$ if and only if $b'\leq i< b$, we find that
$$\sum_{a'\leq i < a}\pi_{N}(d')_i=\sum_{b'\leq i < b}\pi_N(d')_i.$$

By subtracting $\sum_{b'\leq i < a}\pi_N(d')_i$ from each side, we get
\begin{equation}\sum_{a'\leq i < b'}\pi_{N}(d')_i=\sum_{a\leq i < b}\pi_{N}(d')_i.\label{theconditionything}\end{equation}

As long as this condition is satisfied, we may choose values for $d$ on bar 2 that satisfy the exactness condition. The fact that these values are positive is trivial given that $\kappa_{v_{2,i}}$ is non-increasing in $i$. Thus, an exact metric $d'$ on $N$ extends uniquely an exact metric $d$ on $N'$ if and only if Equation \ref{theconditionything} is satisfied. Equivalently, the image of $\sigma$ is precisely the subspace of $\mathcal R(N)$ on which the the given equation holds.
\end{proof}
We can define a Newton subdivision $N_m$ of a triangle with side length $m+2$ such that its edges are exactly horizontal edges from one side of the subdivision to the other and every face is of the form $P_{1,0,0,w_3}$. This is shown in the right half of Figure \ref{newtonianPicture}. This construction is the Newton subdivision of a series of tropical lines sharing a common vertical ray. In addition, $N_m$ is $m$-extensible and the induced map $\pi_{N_m}:\mathcal R(N_m)\rightarrow\mathbb R_{>0}^m$ is an isomorphism, as all the interior edges in $N_m$ touch the left edge of the subdivision. There are no constraints on the values of an exact metric on $N_m$ since there are no interior vertices.

A straightforward induction using the previous lemma shows the following lemma.
\begin{lemma}
For any intervallic subspace $S$ of $\mathbb R_{>0}^m$, there exists an extensible Newton subdivision $N$ such that $\mathcal R(N)$ is isomorphic to $S$ by $\pi_N$.
\end{lemma}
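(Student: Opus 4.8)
The plan is to induct on the number of constraints defining $S$. Write $S$ as the subspace of $\mathbb R_{>0}^m$ cut out by the admissible tuples in a finite set $I=\{t_1,\ldots,t_k\}$, where imposing $t_j=(a_j',b_j',a_j,b_j)$ means requiring $\sum_{a_j\leq i<b_j}v_i=\sum_{a_j'\leq i<b_j'}v_i$. Let $S_j\subseteq\mathbb R_{>0}^m$ denote the partial intersection cut out by $t_1,\ldots,t_j$, so that $S_0=\mathbb R_{>0}^m$ and $S_k=S$. For the base case $j=0$ I take $N=N_m$, the all-horizontal subdivision described just above, whose realization space carries no constraints and for which $\pi_{N_m}:\mathcal R(N_m)\to\mathbb R_{>0}^m$ is an isomorphism onto all of $\mathbb R_{>0}^m=S_0$.

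For the inductive step, suppose I have built an $m$-extensible subdivision $N_{j-1}$ for which $\pi_{N_{j-1}}$ is a linear isomorphism onto $S_{j-1}$. Applying Lemma \ref{newtonConstruction} to $N_{j-1}$ and the tuple $t_j$ produces an $m$-extensible subdivision $N_j$ together with an injective linear map $\sigma:\mathcal R(N_j)\to\mathcal R(N_{j-1})$ satisfying $\pi_{N_j}=\pi_{N_{j-1}}\circ\sigma$, whose image is exactly the set of $d\in\mathcal R(N_{j-1})$ for which $\pi_{N_{j-1}}(d)$ satisfies the $t_j$-constraint. I then check that $\pi_{N_j}$ is the desired isomorphism: it is injective as a composite of two injective linear maps, and its image is $\pi_{N_{j-1}}(\operatorname{im}\sigma)$. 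Since $\pi_{N_{j-1}}$ carries $\mathcal R(N_{j-1})$ isomorphically onto $S_{j-1}$, this image is precisely the subset of $S_{j-1}$ obeying $t_j$, namely $S_j=S_{j-1}\cap\{t_j\text{ holds}\}$. After $k$ steps, $N=N_k$ and $\pi_N$ is a linear isomorphism onto $S$.

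The engine here is entirely Lemma \ref{newtonConstruction}; the induction itself is only bookkeeping. The point I would take most care with is the compatibility relation $\pi_{N'}=\pi_N\circ\sigma$, since this is what lets me identify the coordinate data of successive subdivisions and thereby express each new constraint in the fixed coordinates $v_0,\ldots,v_{m-1}$ rather than in coordinates that drift from step to step. I would also record that $m$-extensibility is preserved at each stage (this is asserted by the lemma), so that every projection $\pi_{N_j}$ is defined; and I would note that positivity of exact metrics — the reason we work over $\mathbb R_{>0}^m$ rather than $\mathbb R^m$ — is already handled inside Lemma \ref{newtonConstruction}, so no separate argument is needed. The only thing that could go wrong is that the image at some stage fails to be exactly the partial intersection, but this is excluded by combining the ``image of $\sigma$'' clause of the lemma with the inductive identification of $\operatorname{im}\pi_{N_{j-1}}$ with $S_{j-1}$, so there is no real obstacle.
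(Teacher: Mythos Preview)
Your proposal is correct and is exactly the argument the paper has in mind: the paper states only that ``a straightforward induction using the previous lemma'' proves this, with base case $N_m$, and you have written out that induction in full. The key ingredients you identify---the compatibility $\pi_{N'}=\pi_N\circ\sigma$, preservation of $m$-extensibility, and the image clause of Lemma~\ref{newtonConstruction}---are precisely what makes the bookkeeping work.
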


In Figure \ref{complete}, we diagram an arrangement of tropical lines.

\begin{figure}
\begin{center}
\begin{tikzpicture}
\draw[black] (0,0) -- (-13,0);
\draw[black] (0,2) -- (-13,2);
\draw[black] (0,2.5) -- (-13,2.5);
\draw[black] (0,3.5) -- (-13,3.5);
\draw[black] (0,4) -- (-13,4);
\draw[black] (0,4.5) -- (-13,4.5);
\draw[black] (0,5.5) -- (-13,5.5);

\draw[black] (0,5.5) -- (0,-2);

\draw[black] (0,0) -- (2,2);
\draw[black] (0,2) -- (2,4);
\draw[black] (0,2.5) -- (2,4.5);
\draw[black] (0,3.5) -- (2,5.5);
\draw[black] (0,4) -- (2,6);
\draw[black] (0,4.5) -- (2,6.5);

\draw[black] (-3.5,2) -- (2,7.5);
\draw[black] (-3.5,4) -- (0,7.5);
\draw[black] (-3.5,5.5) -- (-3.5,-2);
\draw[black] (-2,5.5) -- (-2,-2);

\draw[black] (-9,0) -- (-1.5,7.5);
\draw[black] (-9,2) -- (-3.5,7.5);
\draw[black] (-9,5.5) -- (-9,-2);
\draw[black] (-7,4) -- (-7,-2);

\draw[black] (-7,7.5) -- (-11,3.5);
\draw[black] (-7.5,7.5) -- (-11,4);
\draw[black] (-11,5.5) -- (-11,-2);
\draw[black] (-10.5,4.5) -- (-10.5,-2);
\draw[black] (-11,5.5) -- (-9,7.5);

\fill[black] (0,0) circle (2pt);
\fill[black] (0,2) circle (2pt);
\fill[black] (0,2.5) circle (2pt);
\fill[black] (0,3.5) circle (2pt);
\fill[black] (0,4) circle (2pt);
\fill[black] (0,4.5) circle (2pt);
\fill[black] (0,5.5) circle (2pt);

\fill[black] (-3.5,2) circle (2pt);
\fill[black] (-3.5,4) circle (2pt);
\fill[black] (-3.5,5.5) circle (2pt);
\fill[black] (-2,5.5) circle (2pt);

\fill[black] (-9,0) circle (2pt);
\fill[black] (-9,2) circle (2pt);
\fill[black] (-9,5.5) circle (2pt);
\fill[black] (-7,4) circle (2pt);

\fill[black] (-11,3.5) circle (2pt);
\fill[black] (-11,4) circle (2pt);
\fill[black] (-11,5.5) circle (2pt);

\fill[black] (-10.5,4.5) circle (2pt);

\draw [<->] (-12,0.1) -- (-12,1.9) node [black,midway,xshift=-.3cm] {$x$};
\draw [<->] (-12,2.1) -- (-12,2.4) node [black,midway,xshift=-.9cm] {$x-y-z$};
\draw [<->] (-12,2.6) -- (-12,3.4) node [black,midway,xshift=-.3cm] {$y$};
\draw [<->] (-12,3.6) -- (-12,3.9) node [black,midway,xshift=-.3cm] {$z$};
\draw [<->] (-12,4.1) -- (-12,4.4) node [black,midway,xshift=-.3cm] {$z$};
\draw [<->] (-12,4.6) -- (-12,5.4) node [black,midway,xshift=-.6cm] {$x-2z$};

\draw [decorate,decoration={brace},yshift=-.1cm] (0.1,-2) -- (-3.4,-2) node [black,midway,yshift=-.5cm] 
{Constraint I};
\draw [decorate,decoration={brace},yshift=-.1cm] (-3.6,-2) -- (-8.9,-2) node [black,midway,yshift=-.5cm] 
{Constraint II};
\draw [decorate,decoration={brace},yshift=-.1cm] (-9.1,-2) -- (-10.9,-2) node [black,midway,yshift=-.5cm] 
{Constraint III};

\end{tikzpicture}
\end{center}
\caption{A construction showing an arrangement with $3$ marked distances $x,\,y,\,z$ enforcing the constraints that $x>y+z$ and $x>2z$. Each of the constraints labeled indicate that the corresponding distances cannot be deformed freely, but must satisfy a linear inequality.}
\label{complete}
\end{figure}
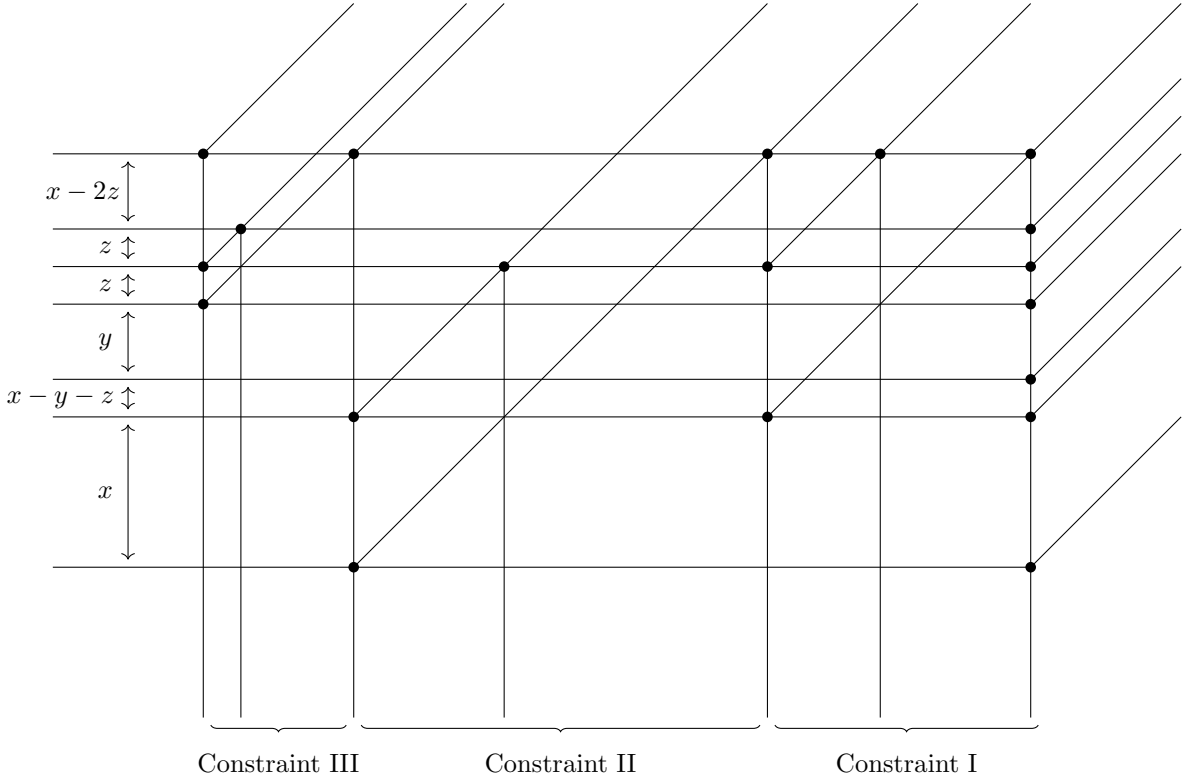
\subsection{Subspaces of the positive orthant}
We show that we can construct intervallic subspaces linearly isomorphic to any subspace of $\mathbb R_{>0}^m$ defined by linear equalities and inequalities. Define the canonical projection $\pi_{m',m}:\mathbb R^{m'}\rightarrow\mathbb R^m$ for $m'\geq m$ taking a vector to its first $m$ coordinates.

\begin{lemma}Fix some intervallic subspace $S\subseteq \mathbb R_{>0}^m$ and any integers $0\leq a,b<m$. There exists an intervallic subspace $S'\subseteq \mathbb R_{>0}^{m+3}$ such that the canonical projection $\pi_{m+3,m}$ is a linear isomorphism $S'\rightarrow S$ and for every $v\in S'$ we have $v_{m+2}=v_a+v_b$.
\label{addition}
\end{lemma}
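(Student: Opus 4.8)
The plan is to realize the addition relation $v_{m+2}=v_a+v_b$ by adjoining three new admissible tuples to the defining set of $S$, chosen so that each of the three new coordinates $v_m,v_{m+1},v_{m+2}$ is pinned to a sum of strictly-smaller-indexed coordinates. Concretely, let $I$ be a finite set of admissible tuples such that $S$ is the locus where Equation \ref{constraint} holds for every member of $I$. Viewing $I$ as a set of constraints on $\mathbb R_{>0}^{m+3}$ (its constraints involve only the first $m$ coordinates), I would define $S'$ to be the intervallic subspace cut out by $I$ together with the three tuples
\[(a,\,a+1,\,m,\,m+1),\qquad (b,\,b+1,\,m+1,\,m+2),\qquad (m,\,m+2,\,m+2,\,m+3).\]
By the definition of the constraint attached to a tuple, these force $v_m=v_a$, then $v_{m+1}=v_b$, and finally $v_{m+2}=v_m+v_{m+1}$, so that every $v\in S'$ satisfies $v_{m+2}=v_a+v_b$, which is the required incidence.

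First I would check that the three tuples are admissible, i.e. that a tuple $(a',b',a,b)$ satisfies $a<b$, $a'<b'$, $a'<a$ and $b'<b$. For the first tuple the four inequalities read $m<m+1$, $a<a+1$, $a<m$ and $a+1<m+1$, all of which follow from the hypothesis $0\le a<m$; the second tuple is handled identically using $b<m$; and the third, $(m,\,m+2,\,m+2,\,m+3)$, involves only the consecutive new indices and is admissible outright. This guarantees that $S'$ is genuinely intervallic.

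Next I would verify that $\pi_{m+3,m}$ restricts to a linear isomorphism $S'\to S$ by exhibiting its inverse. Define $\iota:S\to\mathbb R_{>0}^{m+3}$ by $\iota(w)_j=w_j$ for $j<m$, $\iota(w)_m=w_a$, $\iota(w)_{m+1}=w_b$ and $\iota(w)_{m+2}=w_a+w_b$; this map is manifestly linear, and its image lands in $\mathbb R_{>0}^{m+3}$ because the three new entries are positive sums of positive coordinates. One checks that $\iota(w)$ satisfies every constraint of $I$ (these only see the unchanged first $m$ coordinates) and the three new constraints (by construction), so $\iota(w)\in S'$; conversely any $v\in S'$ has its first $m$ coordinates in $S$ and, by the three new constraints, has $v_m,v_{m+1},v_{m+2}$ determined by those coordinates. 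Hence $\iota\circ\pi_{m+3,m}=\mathrm{id}_{S'}$ and $\pi_{m+3,m}\circ\iota=\mathrm{id}_{S}$, giving the desired linear isomorphism and, a fortiori, the relation $v_{m+2}=v_a+v_b$ on $S'$.

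The computation is routine; the only point requiring care -- and the closest thing to an obstacle -- is ensuring that the three adjoined constraints impose no hidden relation among the first $m$ coordinates, which would shrink the image strictly below $S$. This is precisely the purpose of the staggered indexing $m\mapsto m+1\mapsto m+2$: each new constraint expresses a single new coordinate as a sum of coordinates of strictly smaller index, so the enlarged system is triangular in the new variables and back-substitution never forces a relation on $v_0,\dots,v_{m-1}$. Confirming this triangularity, together with the admissibility inequalities above, completes the argument.
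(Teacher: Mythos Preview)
Your proof is correct and follows essentially the same approach as the paper: adjoin three admissible tuples to $I$ forcing $v_m=v_a$, $v_{m+1}=v_b$, and $v_{m+2}=v_m+v_{m+1}$, then exhibit the explicit linear inverse $\iota$ to $\pi_{m+3,m}$. Your treatment is in fact more careful than the paper's terse version---you verify admissibility explicitly, and your first two tuples are written in the correct order, whereas the paper lists them with the halves swapped (which yields the same equality constraint but technically violates the admissibility inequalities $a'<a$, $b'<b$).
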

\begin{proof}Let $I$ be a set of constraints determining the set $S$ and define \[I'=I\cup \{(m,m+1,a,a+1),(m+1,m+2,b,b+1),(m+2,m+3,m,m+2)\}.\]
Let $S'$ be the subspace determined by $I'$. Define a map $f:\mathbb R^m\rightarrow\mathbb R^{m+3}$ by \[f(v_0,\ldots,v_{m-1})=(v_0,\ldots,v_{m-1},v_a,v_b,v_a+v_b).\] Note that $f$ takes $S$ to $S'$. More strongly, the restriction of $f$ to $S$ is the inverse of the restriction of the $\pi_{m',m}$ to $S'$.\end{proof}

By repeatedly applying this observation, we establish the following lemma. 

\begin{lemma}
Let $S\subseteq \mathbb R_{>0}^m$ be an intervallic subspace and fix some set $F$ of non-zero linear functions $f:\mathbb R_{>0}^m\to \mathbb R_{>0}$ with non-negative integer coefficients. There exists a $m'\geq m$ and an intervallic subspace $S'\subseteq \mathbb R_{>0}^{m'}$ such that the projection $\pi_{m',m}$ is a linear isomorphism $S'\rightarrow S$ and such that for every $f\in F$, there exists some $0\leq i < m'$ such that for all $v\in S'$ we have $v_i=f(\pi_{m',m}(v))$. 
\label{positiveOrthantUniversality}
\end{lemma}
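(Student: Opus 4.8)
The plan is to prove Lemma~\ref{positiveOrthantUniversality} by induction on the size of the set $F$ of linear functions, using Lemma~\ref{addition} as the inductive engine. The key observation is that Lemma~\ref{addition} lets us append a single new coordinate equal to the sum $v_a+v_b$ of two \emph{existing} coordinates, while keeping the realization space isomorphic via the canonical projection. Since each $f\in F$ has non-negative integer coefficients, $f$ can be written as a sum of the input coordinates with multiplicities, i.e. $f(v_0,\ldots,v_{m-1})=\sum_{j} c_j v_j$ with $c_j\in\mathbb Z_{\geq 0}$. Such a sum can be built up by repeatedly adding two already-constructed coordinates together, so that after finitely many applications of Lemma~\ref{addition} there is a coordinate of the ambient space whose value is identically $f$ on the subspace.

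First I would make precise the claim that any non-negative integer linear combination $\sum_j c_j v_j$ is realizable as an iterated binary sum. One clean way is to note that $v_j + v_j = 2v_j$ is itself an instance of Lemma~\ref{addition} (taking $a=b=j$), so multiplication by a constant is available; more simply, $\sum_j c_j v_j$ with $C=\sum_j c_j$ terms can be assembled by $C-1$ binary additions in any bracketing order. At each step, the two summands we add are coordinates that either belonged to the original space or were produced by an earlier application of the lemma; crucially, Lemma~\ref{addition} permits the indices $a,b$ to be any two coordinates of the current space, including freshly created ones, so every intermediate partial sum is available as an input to the next addition.

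Second I would set up the formal induction. Starting from the given intervallic subspace $S\subseteq\mathbb R_{>0}^m$, I enumerate the functions $f_1,\ldots,f_{|F|}$ in $F$ and handle them one at a time. For $f_1$, I apply Lemma~\ref{addition} a number of times equal to the number of binary additions needed to assemble $f_1$; each application enlarges the ambient dimension by $3$, produces a new intervallic subspace, and — by composing the successive canonical projections — keeps the whole subspace linearly isomorphic to $S$ via $\pi_{m',m}$. After the last addition for $f_1$, one of the new coordinates equals $f_1\circ\pi$ on the subspace. I then repeat the process for $f_2$, and so on; since the projection to the first $m$ coordinates at each stage is an isomorphism, the coordinates realizing $f_1$ remain valid (their defining constraints are preserved), and the previously constructed equalities are untouched because Lemma~\ref{addition} only adds constraints among the newly introduced coordinates together with existing ones. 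The composite of all the canonical projections is again a canonical projection $\pi_{m',m}$ and is a linear isomorphism onto $S$.

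The main obstacle I anticipate is bookkeeping rather than conceptual: I must verify that successive applications of Lemma~\ref{addition} genuinely compose, i.e.\ that the isomorphism $\pi_{m',m}\colon S'\to S$ at the end is literally the projection onto the first $m$ coordinates and that the coordinate realizing each $f$ survives all subsequent extensions. This requires checking that Lemma~\ref{addition} never alters the value of an already-constructed coordinate (it does not, since the new constraints it imposes only pin down the three newly added coordinates as sums of existing ones), and that ``intervallic'' is preserved throughout, which is immediate because each step simply enlarges the finite constraint set $I$ by three admissible tuples. I would also confirm positivity is automatic: every realized value is a sum of strictly positive coordinates, hence lies in $\mathbb R_{>0}$, so the target codomain $\mathbb R_{>0}$ of each $f\in F$ causes no difficulty.
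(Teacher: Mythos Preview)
Your proposal is correct and follows essentially the same approach as the paper: both reduce the problem to iterated applications of Lemma~\ref{addition}, building each $f\in F$ as a chain of binary sums of existing coordinates. The only difference is organizational---the paper runs a single induction on the total sum of all coefficients of the functions in $F$, peeling off one summand $v_a$ from one $f$ at each step (replacing $f$ by $f'=f-v_a$ in $F$), whereas you do an outer induction on $|F|$ with an inner iterated construction for each $f$; the underlying sequence of extensions is the same.
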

\begin{proof}
We proceed by induction on the sum of the coefficients of every $f\in F$. Choose some $f\in F$. We handle two cases. If the sum of the coefficients in $f$ is $1$, then $f(v)=v_a$ for some $0\leq a <m$. By the inductive hypothesis, choose some intervallic subspace $S'$ satisfying the conditions of the lemma for the set $F\setminus \{f\}$. Then, $S'$ satisfies the conditions of the lemma for the set of functions $F$ as well since $v_a=f(\pi_{m',m}(v))$ by definition of $f$.

If the sum of the coefficients in $f$ is greater than $1$, write $f(v)=v_a+f'(v)$ for some $0\leq a< m$. By induction, choose an intervallic set $S'\subseteq \mathbb R^{m'}$ satisfying the conditions of the lemma for the set of functions $F\setminus \{f\}\cup \{f'\}$. Let $0\leq b<m'$ be such that $v_b=f'(\pi_{m',m}(v))$. Now, apply Lemma \ref{addition} to get a intervallic set $S''\subseteq \mathbb R^{m'+3}$ such that $v_{m'+3}=v_a+v_b=v_a+f'(v)=f(v)$ and $\pi_{m+3,m}$ is a linear isomorphism $S''\rightarrow S'$. Observe that for any $g\in F\setminus \{f\}$ we have that $g\in\setminus\{f\}\cup\{f'\}$, thus there is some $i$ such that $v_i=f(\pi_{m',m}(v))$ for any $v\in S'$ and, since $\pi_{m',m}\circ\pi_{m'+3,m'}=\pi_{m'+3,m}$, we have $v_i=f(\pi_{m'+3,m}(v))$ for any $v\in S''$. Thus, $S''$ satisfies the conditions of the lemma.
\end{proof}

This lemma tells us enough about the structure of intervallic sets to prove the following lemma necessary to the proof of universality.
\begin{lemma}
Let $E_0$ and $E_+$ be two sets of linear functions on $\mathbb R_{>0}^m$ with integer coefficients. Define a subspace 
\[V=\{v\in\mathbb R_{>0}^m:\forall f\in E_0[f(v)=0]\text{ and }\forall f\in E_+[f(v)>0]\}.\]
Then $V$ is linearly isomorphic to an intervallic subspace.
\label{ppositive}
\end{lemma}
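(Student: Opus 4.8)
The plan is to reduce the defining conditions of $V$ to equalities between \emph{non-negative} integer combinations of coordinates, which is precisely the input that Lemma~\ref{positiveOrthantUniversality} is built to handle, and then to realize each such equality as a single-coordinate intervallic constraint $v_i=v_j$. The two features of the general problem that are not immediately intervallic are the strict inequalities coming from $E_+$ and the possibly negative coefficients; I would dispose of both before invoking the earlier lemmas.

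First I would eliminate the strict inequalities, exploiting that we work inside the \emph{open} positive orthant, where positivity comes for free: a strict inequality $g(v)>0$ holds exactly when $g(v)=s$ for some auxiliary coordinate $s$ ranging over $\mathbb{R}_{>0}$. Writing $E_+=\{g_1,\dots,g_k\}$, I would introduce slack coordinates $s_1,\dots,s_k$ and set
\[
W=\{(v,s)\in\mathbb{R}_{>0}^{m+k}: f(v)=0\ \text{for all }f\in E_0,\ g_t(v)=s_t\ \text{for all }t\}.
\]
The coordinate projection $(v,s)\mapsto v$ restricts to a linear isomorphism $W\to V$, with inverse the linear map $v\mapsto(v,g_1(v),\dots,g_k(v))$; this lands in $W$ precisely because every $g_t(v)$ is positive on $V$. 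Thus it suffices to realize $W$, and $W$ is cut out by equalities alone.

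Next I would turn each equality into an identity between nonzero non-negative combinations. Every integer-coefficient linear function $h$ splits as $h=h^+-h^-$ with $h^+,h^-$ non-negative, so an equality $h(w)=0$ on $\mathbb{R}_{>0}^{m+k}$ reads $h^+(w)=h^-(w)$. To force both sides to be nonzero (as Lemma~\ref{positiveOrthantUniversality} requires) I would add the coordinate $w_0$ to each side, rewriting the equality equivalently as $h^+(w)+w_0=h^-(w)+w_0$, where both sides are nonzero non-negative combinations (the case $m=0$ is trivial and excluded). Collecting all functions $p,q$ appearing as sides of these finitely many equalities into a family $F$, I would apply Lemma~\ref{positiveOrthantUniversality} to the trivially intervallic subspace $\mathbb{R}_{>0}^{m+k}$ and to $F$. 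This yields some $m'\geq m+k$, an intervallic $S'\subseteq\mathbb{R}_{>0}^{m'}$ with $\pi_{m',m+k}\colon S'\to\mathbb{R}_{>0}^{m+k}$ a linear isomorphism, and for each $p\in F$ a coordinate index $i_p$ with $w'_{i_p}=p(\pi_{m',m+k}(w'))$ for all $w'\in S'$. For each defining equality $p(w)=q(w)$ of $W$ I would then impose the single-coordinate constraint $w'_{i_p}=w'_{i_q}$ on $S'$. Because $\pi_{m',m+k}|_{S'}$ is a bijection, a nontrivial equality has $p\neq q$ and hence $i_p\neq i_q$, so after relabeling the smaller index as the primed one this is exactly an admissible tuple of the form $(\min,\min+1,\max,\max+1)$ in Equation~\ref{constraint}; appending these tuples to the defining set of $S'$ produces an intervallic subspace $S''$. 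Under the isomorphism $\pi_{m',m+k}$ the constraint $w'_{i_p}=w'_{i_q}$ corresponds to $p(w)=q(w)$, so $S''$ maps isomorphically onto $W$, and composing with $W\cong V$ finishes the proof.

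The hard part is conceptual rather than computational: intervallic subspaces record only equalities, so essentially all the content lives in the slack-variable step, which converts strict inequalities into equalities using the positivity of the ambient orthant. A secondary point I would verify is the behavior in degenerate cases. When an $E_0$ equality is unsatisfiable or an $E_+$ function is nowhere positive, $V$ is empty; here the $+w_0$ device ensures the corresponding constraint $w'_{i_p}=w'_{i_q}$ forces some coordinate to vanish, so $S''$ cuts out the empty set as well (note that intervallic subspaces can indeed be empty, e.g.\ imposing both $v_0=v_1$ and $v_0=v_1+v_2$), and the linear isomorphism holds vacuously. I would check these boundary cases explicitly but expect no difficulty.
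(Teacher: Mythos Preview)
Your proposal is correct and follows essentially the same route as the paper: introduce slack variables to convert each strict inequality in $E_+$ into an equality, split every resulting equality into its positive and negative parts, invoke Lemma~\ref{positiveOrthantUniversality} to materialize each part as a single coordinate, and then impose the equalities via admissible tuples of the form $(\min,\min+1,\max,\max+1)$. Your explicit introduction of the intermediate space $W$ and your $+w_0$ device to guarantee the functions fed to Lemma~\ref{positiveOrthantUniversality} are nonzero are small expository improvements over the paper, which handles these points implicitly.
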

\begin{proof}
For any linear function $f:\mathbb R_{>0}^m\rightarrow\mathbb R$ with coefficients in $\mathbb Z$, define functions $f^+$ and $f^-$ to be the terms with positive coefficients in $f$ and $-f$ respectively. Observe that $f=f^+-f^-$. The condition that $f(v)=0$ is equivalent to $f^+(v)=f^{-}(v)$. The condition $f(v)>0$ is equivalent to the existence of some unique positive $x$ such that $f^+(v)=f^{-}(v)+x$. Our proof strategy will be to express everything in terms of the equality of linear functions with non-negative integer coefficients, introducing as many slack variables as necessarily.

Let $m'=m+|E_+|$. Fix some bijection $s:E_+\rightarrow \{m,\ldots,m+|E_+|-1\}$. For each $f\in E_+$ define a function $e_{f}:\mathbb R_{>0}^{m'}\rightarrow \mathbb R_{>0}$ which takes $(v_0,\ldots,v_{m'-1})$ to $v_{s(f)}$. For each $f\in E_0\cup E_+$, define $g_f^+=f^+\circ \pi_{m',m}$. If $f\in E_0$, define $g_f^-=f^-\circ \pi_{m',m}$. If $f\in E_+$, define $g_f^-=f^-\circ \pi_{m',m}+e_f$. These functions are modifications of the original set of constraints, changed to operate on $\mathbb R_{>0}^{m'}$ using the last $|E_+|$ coordinates as slack variables.

Let $S=\mathbb R_{>0}^{m'}$. This is an intervallic subspace. Define a set of linear functions $F$ by
\[
F=\{g_f^+:f\in E_0\cup E_+\}\cup \{g_f^-:f\in E_0\cup E_+\}.
\]
By Lemma \ref{positiveOrthantUniversality}, there is some intervallic subspace $S'\subseteq \mathbb R_{>0}^{m''}$ such that the canonical projection $\pi_{m'',m'}$ is a linear isomorphism $S'\rightarrow S$ and for every $f\in F$ there is some $i$ such that $v_i=f(\pi(v))$. Fix some set of constraints $I'$ that determines $S'$.

For each $f\in E_0\cup E_+$, choose a pair $(a,b)$ with $0\leq a,b < m''$ such that, for all $v\in S'$ we have that $v_a=g_f^+(\pi_{m'',m'}(v))$ and $v_b=g_f^-(\pi_{m'',m'}(v))$. Define an admissible tuple \[i_f=(\min(a,b),\min(a,b)+1,\max(a,b),\max(a,b)+1).\]
We augment our set of constraints $I'$ by adding these new tuples to the set of constraints:
$$I''=I'\cup \{i_f:f\in E_0\}\cup \{i_f:f\in E_+\}.$$
As a constraint, the tuple $i_f$ forces that $g_f^+(\pi_{m'',m'}(v))=g_f^-(\pi_{m'',m'}(v))$ on $S''$.

Let $S''$ be the set determined by $I''$. We may study this set using the fact that $\pi_{m'',m'}$ is a linear isomorphism from $S'$ to $S$. Let $\pi^{-1}:S\rightarrow S'$ be the inverse of $\pi_{m'',m'}$ restricted to $S'$. Note that $v\in \pi_{m'',m'}(S'')$ exactly if $\pi^{-1}(v)\in S''$. Since $\pi^{-1}(v)$ automatically satisfies every constraint in $I'$, it is only necessary to check that it satisfies the added constraints of the form $i_f$. This is equivalent to asking that $g_f^+(v)=g_f^-(v)$ for all $f\in E_0\cup E_+$.

Equivalently $\pi_{m'',m'}(S'')$ is precisely the subspace of $v\in S$ on which $f(\pi_{m',m}(v))=0$ for all $f\in E_0$ and $f(\pi_{m',m}(v))=v_{s(f)}$ for all $f\in E_+$. Given that the last $|E_+|$ coordinates of an element of $\pi_{m'',m'}(S'')$ are determined by the first $m$, we find that $\pi_{m',m}$ is injective on $\pi_{m'',m'}(S'')$. By composition, we find that $\pi_{m'',m}$ is also injective on $S''$. Finally, the image $\pi_{m'',m}(S'')$ is precisely the set of $v$ for which $f(v)=0$ for any $f\in E_0$ and $f(v)>0$ for any $f\in E_0$. This image equals $V$ and is linearly isomorphic to $S''$, which is intervallic.
 \end{proof}
 
In combination, Lemmas \ref{ppositive} and \ref{newtonConstruction} establish Theorem \ref{universalitythm}.

\bibliographystyle{siam}
\bibliography{bibliography}

\end{document}